\def\Im{\mathop{\rm Im}\nolimits}
\def\supp{\mathop{\rm supp}\nolimits}
\def\Im{\mathop{\rm Im}\nolimits}
\def\supp{\mathop{\rm supp}\nolimits}
\def\R{\mathbb R}
\def\N{\mathbb N}
\def\ds{\displaystyle}
\newcommand\dslash{d\llap {\raisebox{.9ex}{$\scriptstyle-\!$}}}
\newcommand{\beqsn}{\arraycolsep1.5pt\begin{eqnarray*}}
\newcommand{\eeqsn}{\end{eqnarray*}\arraycolsep5pt}
\newcommand{\beqs}{\arraycolsep1.5pt\begin{eqnarray}}
\newcommand{\eeqs}{\end{eqnarray}\arraycolsep5pt}
\newtheorem{theorem}{Theorem}
\newtheorem{lemma}{Lemma}
\newtheorem{proposition}{Proposition}
\newtheorem{definition}{Definition}
\newtheorem{remark}{Remark}
\renewcommand{\section}%
   {\setcounter{equation}{0}\@startsection {section}{1}{\z@}{-3.5ex plus -1ex
  minus -.2ex}{2.3ex plus .2ex}{\Large\bf}}
\title[ ]{On the Cauchy problem for $p$-evolution equations with variable coefficients: a necessary condition for Gevrey well-posedness}
\author[A. Arias Junior]{Alexandre Arias Junior}
\address{Dipartimento di Matematica ``G. Peano'' \\Universit\`a di Torino\\
	Via Carlo Alberto 10\\
	10123 Torino\\
	Italy}
\email{alexandre.ariasjunior@unito.it}
\author[A. Ascanelli]{Alessia Ascanelli}
\address{Dipartimento di Matematica ed Informatica\\Universit\`a di Ferrara\\
Via Machiavelli 30\\
44121 Ferrara\\
Italy}
\email{alessia.ascanelli@unife.it}
\author[M. Cappiello]{Marco Cappiello}
\address{Dipartimento di Matematica ``G. Peano'' \\Universit\`a di Torino\\
Via Carlo Alberto 10\\
10123 Torino\\
Italy}
\email{marco.cappiello@unito.it}
\begin{document}


\begin{abstract}
In this paper we consider a class of $p$-evolution equations of arbitrary order with variable coefficients depending on time and space variables $(t,x)$. We prove necessary conditions on the decay rates of the coefficients for the well-posedness of the related Cauchy problem in Gevrey spaces. 
\end{abstract}

\maketitle
\noindent  \textit{2010 Mathematics Subject Classification}: 35G10, 35S05, 35B65, 46F05 \\
\noindent
\textit{Keywords and phrases}: $p$-evolution equations, Gevrey classes, well-posedness, necessary conditions

\section{Introduction}\label{section_introduction}
The main concern in this paper is the Gevrey well-posedness of the Cauchy problem
\begin{equation}\label{cpp}
	\begin{cases}
		Pu(t,x) = 0, \quad (t,x) \in [0,T]\times \R, \\
		u(0,x) = \phi(x), \quad x \in \R
	\end{cases}
\end{equation} 
where, for a fixed $p \in \N, p \geq 2$, $P$ is a non-kowalewskian linear evolution operator of the form
\begin{equation}\label{full operator}
P = D_t + a_p(t)D^{p}_{x} + \sum_{j = 1}^{p} a_{p-j}(t,x)D_x^{p-j}, \quad (t,x)\in[0,T]\times\R, \quad D=-i\partial.
\end{equation}
We assume that $a_p \in C([0,T]; \R)$, $a_p(t)\neq 0$ $\forall t\in [0,T]$ and $a_{p-j} \in C([0,T];\mathcal{B}^{\infty}(\R)), j=1,\ldots,p$, where $\mathcal{B}^{\infty}(\R)$ stands for the space of complex-valued functions bounded on $\R$ together with all their derivatives. The operator $P$ is known in the literature as a {\it $p-$evolution operator with real characteristics} (cf. \cite{mizohata2014}). To point out some outstanding particular cases, we recover a Schr\"odinger-type operator when $p = 2$ and linearized KdV-type equations when $p = 3$, cf. \cite{AAC3evolquasilin}. Also for higher values of $p$  the linearizations of several dispersive evolution equations can be written using an operator of the form \eqref{full operator}, see e.g. \cite{PZ} and the references therein.
The Cauchy problem \eqref{cpp} for \eqref{full operator} has been intensively investigated and well understood in $H^\infty(\R)= \bigcap_{m \in \R} H^m(\R)$, see \cite{ascanelli_chiara_zanghirati_2012, ascanelli_chiara_zanghirati_necessary_condition_for_H_infty_well_posedness_of_p_evo_equations, CicRei, ichinose_remarks_cauchy_problem_schrodinger_necessary_condition, I2, KB} and in the Schwartz spaces $\mathscr{S}(\R), \mathscr{S}'(\R)$, see \cite{ACJee}. 
 
In fact, our assumption $a_p(t) \in \R$ agrees with the necessary condition for well-posedness in $H^\infty(\R)$ given by Theorem $3$ on page $31$ of \cite{mizohata2014}: $\Im a_p(t)\leq 0\ \forall t\in [0,T]$.  Under this condition, if the coefficients $a_j$ of the lower order terms are all real-valued, then problem \eqref{cpp} is $L^2$ well-posed; well-posedness of \eqref{cpp} may fail if $\Im a_j\neq 0$ for some $1\leq j\leq p-1.$

A necessary condition on the coefficient $a_{p-1}(t,x)$ of the subprincipal part of $P$ for well-posedness in $H^\infty(\R)$ has been given in \cite{ichinose_remarks_cauchy_problem_schrodinger_necessary_condition} when $p = 2$ and in \cite{ascanelli_chiara_zanghirati_necessary_condition_for_H_infty_well_posedness_of_p_evo_equations} for general $p \geq 3$; it reads as follows:\\
{\it{if \eqref{cpp} is well-posed in $H^\infty(\R)$, then
there exist $M,N>0$ such that 
\begin{equation}\label{cn}\min_{0\leq\tau\leq t\leq T}\int_{-\varrho}^\varrho 
\Im a_{p-1}(t,x+p a_p(\tau)\theta)d\theta\leq M\log(1+\varrho)+N,\qquad 
\forall \varrho>0, \ \forall x\in\R.\end{equation}}}
We recall that dealing with one space dimension, for $p=2$ condition \eqref{cn} is also sufficient for $H^\infty$ well-posedness and with $M=0$ it is necessary and sufficient for $L^2$ well-posedness, cf.\cite{I2}. 

The condition \eqref{cn} clearly implies that if $a_{p-1}$ does not depend on $x$, the problem \eqref{cpp} is not $H^\infty$ well-posed. On the other hand, $x$-dependent coefficients with bounded primitive are allowed. For instance, the operator $D_t + D^2_x + i\cos x \ D_{x}$ fulfills \eqref{cn} with $M=0$ and the associated Cauchy problem is $L^2$ well-posed, cf. \cite{mizohata2014}. 
Nevertheless, if we test condition \eqref{cn}  on the model operator 
\begin{equation}\label{eq_model_operator}
	 D_t + D^p_x + i\langle x \rangle^{-\sigma}D^{p-1}_{x}, \quad \langle x\rangle:=\sqrt{1+x^2},
\end{equation}
we immediately realize that for $\sigma\in (0,1)$ $H^\infty$ well-posedness does not hold. In other words, if $a_{p-1}$ is vanishing for $|x| \to \infty$, then its decay must be fast enough to hope for $H^\infty$ well posedness.

In the study of sufficient conditions for the $H^{\infty}$ well-posedness of \eqref{cpp} it is customary to impose some pointwise decay conditions as $|x| \to \infty$ on the coefficients $a_j(t,x)$, $1\leq j\leq p-1$, see \cite{CicRei, KB} for $p = 2$ and \cite{ascanelli_chiara_zanghirati_2012} for general $p \geq 3$. These decay conditions are of the form $$|\Im a_{j}(t,x)|\leq C_T\langle x\rangle^{-\frac{j}{p-1}}, 1\leq j\leq p-1,$$ matched with suitable decay conditions on the $x$-derivatives of $a_j$.

 \medskip
Coming now to well-posedness in Gevrey type spaces, the known results are restricted to the case $p=2$ for necessary conditions, see \cite{dreher, CRnec}, and to the cases $p = 2$ and $p = 3$ for sufficient conditions, see \cite{KB} and \cite{AACgev} respectively. Concerning sufficient conditions, as in the $H^\infty$ case, they are expressed as decay conditions for $|x| \to \infty$ on the coefficients of the lower order terms. As far as we know, there are no results concerning necessary conditions for Gevrey well-posedness for $p \geq 3$.  
The above mentioned results are settled in the following class of Gevrey functions:
$$
\mathcal{H}^{\infty}_{\theta}(\R) := \bigcup_{\rho > 0} H^{0}_{\rho;\theta}(\R), \quad H^{0}_{\rho;\theta}(\R) := \{ u \in \mathscr{S}'(\R) \colon e^{\rho\langle \xi \rangle^{\frac{1}{\theta}}} \widehat{u}(\xi) \in L^{2}(\R) \}
$$
endowed with the norm
$$
\|u\|_{H^{0}_{\rho;\theta}(\R)} := \| e^{\rho\langle \cdot \rangle^{\frac{1}{\theta}}} \widehat{u}(\cdot)\|_{L^{2}(\R)},
$$
where $\rho > 0$ and $\widehat{u}$ denotes the Fourier transform of $u$. The space $\mathcal{H}^{\infty}_{\theta}(\R)$ is related to Gevrey class of functions in the following sense:
$$
G^{\theta}_{0}(\R) \subset \mathcal{H}^{\infty}_{\theta}(\R) \subset G^{\theta}(\R),
$$
where $G^{\theta}(\R)$ denotes the space of all smooth functions $f$ such that for some $C>0$
$$
\sup_{\alpha \in \N_0} \sup_{x \in \R} |\partial^{\alpha}_{x}f(x)| C^{-\alpha} \alpha!^{-\theta} < +\infty,
$$
and $G^{\theta}_{0}(\R)$ is the space of all compactly supported functions contained in $G^{\theta}(\R)$.

\medskip 

The definition of well-posedness in $\mathcal{H}^{\infty}_{\theta}(\R)$ for the Cauchy problem \eqref{cpp} reads as follows:
\begin{definition}\label{def_WP}
	We say that the Cauchy problem \eqref{cpp} is well-posed in $\mathcal{H}^{\infty}_{\theta}(\R)$ if for any given $\rho_0 > 0$ there exist $\rho > 0$ and $C:=C(\rho,T) > 0$ such that for all $\phi \in H^{0}_{\rho_0;\theta}(\R)$ there exists a unique solution $u \in C^{1}([0,T];H^{0}_{\rho;\theta}(\R))$ and the following energy inequality holds 
	$$
	\| u(t,\cdot) \|_{H^{0}_{\rho;\theta}(\R)} \leq C \|\phi\|_{H^{0}_{\rho_0;\theta}(\R)}, \quad \forall t \in [0,T].
	$$
\end{definition}
\noindent To  explain the available results in the literature in a simple way it is convenient to use the 
model operator \eqref{eq_model_operator} for $p = 2$ and $p = 3$ with $\sigma \in (0,1)$. For results concerning the more general operator \eqref{full operator} we refer to \cite[Theorem 1.1]{KB} in the case $p=2$ and to \cite[Theorem 1]{AACgev} in the case $p=3$. 

\begin{itemize}
\item By \cite{dreher} and \cite{KB}, for the operator $D_t+D_x^2+i\langle x\rangle^{-\sigma} D_x$ we have:
$$
\sigma\in (0,1)\, \text{and} \, 1-\sigma \leq \frac{1}{\theta}\iff \text{well-posedness in}\, \mathcal{H}^\infty_\theta(\R),\  \theta > 1 .
$$
In the limit case $1-\sigma =\frac{1}{\theta}$ we have local in time well-posedness, whereas when $1-\sigma<\frac{1}{\theta}$ we get it on the whole interval $[0,T]$. The number $1/(1-\sigma)$ works so as a threshold for the Gevrey indices $\theta$ for which  well-posedness results can be found. The case $1-\sigma>\frac{1}{\theta}$ has been also investigated in \cite{ACR}, where under additional exponential decay conditions on the datum $g$ a solution in suitable Gevrey classes has been obtained.
\item By \cite{AACgev}, for the operator $D_t+D_x^3+i\langle x\rangle^{-\sigma} D^2_x$ we have:
$$
\sigma \in \left(\frac{1}{2}, 1\right)\, \text{and} \, 2(1-\sigma)\leq \frac{1}{\theta}  \implies \, \text{well-posedness in}\, \mathcal{H}^\infty_\theta(\R),\ \theta > 1.
$$
Again, in the limit case $2(1-\sigma)= \frac{1}{\theta} $ we have  local in time well-posedness, whereas when $2(1-\sigma) < \frac{1}{\theta} $ we get it on the whole interval $[0,T]$. 
\end{itemize}
Thus, in the case $p=3$ the following two questions arise: 
\begin{itemize}
\item [\bf Q1)] What happens when the decay rate $\sigma$ is less than or equal to $\frac{1}{2}$?
\item[\bf Q2)] What happens when $2(1-\sigma)>\frac{1}{\theta} $?
\end{itemize} 

The main goal of this manuscript is to answer the two above questions, and at the same time  to generalize both the questions and the answers to the more general operator \eqref{full operator}. 
Namely, we give a necessary condition on the Gevrey index $\theta$ and on the  decay rates of the imaginary parts of the coefficients of \eqref{full operator} for the well-posedness of the Cauchy problem \eqref{cpp} in $\mathcal{H}^\infty_\theta(\R), \theta >1$. Our main theorem reads as follows.

\begin{theorem}\label{main_theorem}
	Let $\theta > 1$. Let $P$ an operator of the form \eqref{full operator} with $a_p \in C([0,T]; \R)$ and $a_p(t) \neq 0 \, \forall t \in [0,T]$, and assume that the coefficients $a_{p-j}$ satisfy the following conditions:
	\begin{itemize}
		\item [(i)] there exist $R, A > 0$ and $\sigma_{p-j} \in [0,1]$, $j = 1, \ldots, p-1,$ such that 
		$$
		Im\, a_{p-j}(t,x) \geq A \langle x \rangle^{-\sigma_{p-j}}, \quad x > R \, (\textrm{or} \, \, \, x <-R), \, t \in [0,T], \, j=1,\ldots,p-1; $$
		\item [(ii)] there exists $C > 0$ such that
		$$
		|\partial^{\beta}_{x}a_{p-j}(t,x)| \leq C^{\beta + 1} \beta! \langle x \rangle^{-\beta}, \quad x \in \R, \, t \in [0,T],\quad j = 1, \ldots, p.
		$$
	\end{itemize}
 If the Cauchy problem \eqref{cpp} is well-posed in $\mathcal{H}^{\infty}_{\theta}(\R)$, then 
	\begin{equation}\label{equation_thesis_main_theorem}
		\Xi := \max_{j=1,\ldots,p-1} \{(p-1)(1-\sigma_{p-j}) - j + 1\} \leq \frac{1}{\theta}.
	\end{equation}
\end{theorem}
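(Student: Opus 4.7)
The natural strategy, adapting Ichinose's argument and the Ascanelli--Chiara--Zanghirati scheme from the $H^\infty$ case to the Gevrey setting, is to contradict well-posedness by constructing a family of approximate solutions concentrated at high frequency $\lambda\to\infty$ whose energy is forced to grow faster than $e^{\rho\lambda^{1/\theta}}$.

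The first step is to pass to the formal adjoint $P^*$ by duality: well-posedness of \eqref{cpp} in $\mathcal{H}^\infty_\theta(\R)$ forces an energy estimate for the backward Cauchy problem associated with $P^*$, acting on the dual spaces of $H^{0}_{\rho;\theta}(\R)$. Using assumption (ii), the subprincipal coefficients $b_{p-j}$ of $P^*$ satisfy $\Im b_{p-j}=-\Im a_{p-j}$ up to controlled corrections produced by $x$-derivatives of the original coefficients; so the sign change turns the lower bound (i) into an amplifying term for $P^*$.

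Next, for each large $\lambda$, I would construct a WKB-type asymptotic solution of $P^*v=0$ of the form $v_\lambda(t,x)=e^{i\lambda\varphi(t,x)-\Lambda(t,x,\lambda)}a(t,x,\lambda)$, localized near a point $x_0>R$ lying in the region where assumption (i) is in force. The phase $\varphi$ solves the eikonal equation $\partial_t\varphi+a_p(t)(\partial_x\varphi)^p=0$ with $\varphi(0,x)=x$, whose characteristics travel with velocity $p\,a_p(t)\lambda^{p-1}$. The real phase $\Lambda$ is determined by a cascade of transport equations, designed to cancel the imaginary contributions of the operator order by order in $\lambda$. At the level $\lambda^{p-j}$ the forcing is essentially $\lambda^{p-j}\,\Im b_{p-j}(t,x)$; integration along the bicharacteristic $x(s)=x_0+p\int_0^s a_p(\tau)\,d\tau\,\lambda^{p-1}$, together with the change of variables $y=x(s)$, converts
\[
\lambda^{p-j}\int_0^t \langle x_0+c\lambda^{p-1}s\rangle^{-\sigma_{p-j}}\,ds
\quad\text{into a quantity of order}\quad
\lambda^{(p-1)(1-\sigma_{p-j})-j+1},
\]
for each $j=1,\dots,p-1$. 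Summing (or rather maximizing) over $j$ yields $\Lambda(T,x_0,\lambda)\gtrsim \lambda^{\Xi}$.

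Finally, I would plug $v_\lambda$ into the adjoint energy inequality. Since $v_\lambda$ is frequency-concentrated near $\lambda$, its norm in $H^{0}_{\rho_0;\theta}(\R)$ at the ``initial'' time (for the backward problem) is of order $e^{\rho_0\lambda^{1/\theta}}$, while the norm at the other endpoint carries the factor $e^{\Lambda(T,x_0,\lambda)}\gtrsim e^{c\lambda^{\Xi}}$. Well-posedness then forces $e^{c\lambda^{\Xi}}\lesssim e^{\rho\lambda^{1/\theta}}$ for all large $\lambda$, which gives \eqref{equation_thesis_main_theorem}.

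The main obstacles I expect are: (a) making the asymptotic expansion for $v_\lambda$ quantitative in Gevrey-$\theta$ norms, which requires truncating at an optimal order depending on $\lambda$, in the spirit of Borel-type resummation, and using assumption (ii) to keep all symbolic estimates within Gevrey class $G^\theta$ so that the remainder $P^*v_\lambda$ is negligible with respect to $e^{\rho\lambda^{1/\theta}}$; (b) arranging the cascade of transport equations for $\Lambda$ so that every coefficient $a_{p-j}$, and not only $a_{p-1}$, contributes its own exponent $(p-1)(1-\sigma_{p-j})-j+1$, thereby producing the maximum $\Xi$ rather than a single term; (c) handling the duality between $H^{0}_{\rho;\theta}$ and its dual carefully, so that the backward estimate for $P^*$ is exactly the right instrument to be compared with the growth of $v_\lambda$. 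The other ingredients (change of variables along bicharacteristics, frequency-concentration estimates, and the final logarithmic comparison of exponents) are by now standard once the WKB construction is in place.
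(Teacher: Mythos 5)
Your route is genuinely different from the paper's. You propose the classical Ichinose-type scheme: dualize the well-posedness assumption to get a backward a priori estimate for $P^*$, then construct WKB quasi-solutions whose amplification along the bicharacteristics produces the exponent $(p-1)(1-\sigma_{p-j})-j+1$ and hence $\Xi$. The paper never builds approximate solutions: it uses the \emph{exact} solutions $u_k$ furnished by the assumed well-posedness with translated data $\phi_k$, localizes them with a two-parameter family of cutoffs $w_k^{(\alpha\beta)}$ concentrated where $x\sim\nu_k^{p-1}$, $\xi\sim\nu_k$, and runs a Gronwall argument on the weighted energy $E_k(t)$; positivity of the amplifying term is extracted with the SG sharp G\aa rding inequality (Theorem \ref{theorem_SG_sharp_garding}), and the commutator losses are absorbed by the nested scale $\alpha,\beta\le N_k\sim\nu_k^{\lambda/\theta_1}$. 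Your heuristic computation of the growth rate along the flow is correct and reproduces exactly the paper's $\Xi$, so the mechanism you identify is the right one.

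However, as written the proposal has genuine gaps rather than merely technical loose ends, and they sit precisely at the load-bearing points. First, the dual estimate for $P^*$ in the scale $\mathcal{H}^{\infty}_{\theta}(\R)$ is asserted, not derived: Definition \ref{def_WP} gives, for each $\rho_0$, \emph{some} $\rho$ with no control on it, and the dual spaces carry weights $e^{-\rho\langle\xi\rangle^{1/\theta}}$; you must check that the quantifier structure still yields an inequality that can be contradicted by a growth $e^{c\lambda^{\Xi}}$ (this is likely fine since $\Xi>1/\theta$ beats any fixed weight, but it has to be written down, together with the time-direction/sign bookkeeping for $P^*$, where two sign reversals are in play). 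Second, and more seriously, the entire quantitative core is deferred to your listed ``obstacles'': the construction of $v_\lambda=e^{i\lambda\varphi-\Lambda}a$ with $\Lambda\sim\lambda^{\Xi}$ (note $\Xi$ may exceed $1$, so $e^{\Lambda}$ outgrows any $e^{C\lambda}$), valid over a time interval during which the packet travels a distance $\sim\lambda^{p-1}$, with the remainder $P^*v_\lambda$ controlled \emph{relative to} $e^{\Lambda}$ after an optimal, $\lambda$-dependent truncation of the transport hierarchy using assumption (ii). For $p\ge 3$ an amplitude concentrated ``near a point $x_0$'' at scale $O(1)$ disperses to width $\sim\lambda^{p-2}$ over times $O(1)$, so the pointwise-concentrated ansatz must be replaced by a scaled localization (the paper's cutoffs at spatial scale $\nu_k^{p-1}$ are exactly this fix), and the factorial growth of the successive transport terms must be summed with Gevrey control so that the error is negligible against $e^{\rho\lambda^{1/\theta}}$ in the dual estimate. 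None of this is routine for general $p$, and it is comparable in difficulty to the paper's whole Section \ref{section_estimates_from_below}; until it is carried out, the argument does not close, even though the strategy itself is a legitimate alternative to the paper's energy method.
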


\begin{remark}
Notice that in the case $p=2$, the condition i) in Theorem  \ref{main_theorem} is equivalent in one space dimension to the \textit{slow decay condition} assumed in \cite{dreher}. 
\end{remark}

\begin{remark}
	Let us notice that since $(p-1)(1-\sigma_{p-1}) > 0$ we always have $\Xi > 0$. We also point out the following inequalities
	$$
	(p-1)(1-\sigma_{p-j}) -j +1 \geq 1 \iff \sigma_{p-j} \leq \frac{p-1-j}{p-1},
	$$
	$$
	(p-1)(1-\sigma_{p-j}) -j +1 \leq 0 \iff \sigma_{p-j} \geq \frac{p-j}{p-1}.
	$$
	Therefore, as a consequence of \eqref{equation_thesis_main_theorem}, when $\theta > 1$ we conclude the following:
	\begin{itemize}
		\item If $\sigma_{p-j} \leq \frac{p-1-j}{p-1}$ for some $j=1,\ldots, p-1,$ the Cauchy problem is not well-posed in $\mathcal{H}^{\infty}_{\theta}(\R)$;
		
		\item If $\sigma_{p-j} \geq \frac{p-j}{p-1}$ for some $j=1,\ldots, p-1,$ then the power $\sigma_{p-j}$ has no effect on the $\mathcal{H}^{\infty}_{\theta}$ well-posedness;
		
		\item If $\sigma_{p-j} \in \left( \frac{p-1-j}{p-1},  \frac{p-j}{p-1}\right)$ for some $j=1,\ldots, p-1,$ then the power $\sigma_{p-j}$ imposes the restriction 
		$$
		(p-1)(1-\sigma_{p-j}) - j + 1 \leq \frac{1}{\theta}
		$$
		for the indices $\theta$ where $\mathcal{H}^{\infty}_{\theta}$ well-posedness can be found.  
	\end{itemize}

\noindent In particular, we conclude that a first order coefficient of the type $a_{1}(t,x) = i \langle x \rangle^{-\sigma_1}, \sigma_1 \in (0,1)$ cannot affect the $\mathcal{H}^{\infty}_{\theta}$ well-posedness of \eqref{cpp} for all $\theta > 1$, whereas $a_{p-j}(t,x) = i\langle x \rangle^{-\sigma_{p-j}}$, $j = 1, \ldots, p-2$, can compromise well-posedness provided that $\sigma_{p-j}$ is close enough to zero.
	\end{remark} 
	
	\begin{remark} We also notice that if at least one of the coefficients $a_{p-j}(t,x)$ decays less rapidly than any negative power of $\langle x \rangle$, e.g. if $a_{p-j}(t,x)=  i(\log(\sqrt{2+x^2}))^{-1}$ for some $j$, then $a_{p-j}$ fulfills the assumption (ii) for any $\sigma_{p-j} > 0$. So, for any fixed $\theta > 1$ we can find $\sigma_{p-j} \in (0,1)$ so that $(p-1)(1-\sigma_{p-j}) -j + 1 > \theta^{-1}$. Hence, there is no well-posedness in $\mathcal{H}^{\infty}_{\theta}(\R), \theta > 1$ in this case. This opens the new question of finding a suitable functional setting where the Cauchy problem is well-posed in this situation.
\end{remark}

Considering the model operator $D_t + D^3_x + i\langle x \rangle^{-\sigma}D^{2}_{x}$, $\sigma \in (0,1)$, from Theorem \ref{main_theorem} and the above remark we obtain the answer to the two questions posed before.

\begin{itemize}
\item [\bf A1)] There is no well-posedness in $\mathcal{H}^\infty_\theta(\R)$ of the Cauchy problem for $D_{t} + D^{3}_{x} + i\langle x \rangle^{-\sigma}D^{2}_{x}$ when $\sigma \leq \frac{1}{2}$.
\item[\bf A2)] If $\sigma \in (1/2,1)$, the Cauchy problem associated with $D_{t} + D^{3}_{x} + i\langle x \rangle^{-\sigma}D^{2}_{x}$ is not well-posed in $\mathcal{H}^{\infty}_{\theta}(\R)$ when $2(1-\sigma)>\frac{1}{\theta} $ . Thus, $2(1-\sigma)$ is the threshold for the Gevrey indices $\theta$ for which well-posedness results in $\mathcal{H}^\infty_\theta(\R)$ can be obtained.
\end{itemize}

\begin{remark}
By the known results for $p=2, p=3$ and by Theorem \ref{main_theorem}, we conjecture that we can obtain well-posedness for \eqref{cpp} in $\mathcal{H}^\infty_\theta(\R)$ for all $\theta \in (1, \Xi^{-1})$ for a generic $p$. Notice that the latter interval becomes more and more narrow as $p$ increases. We intend to treat this problem in a future paper.
\end{remark}

The paper is organized as follows. In Section \ref{section_preliminaries} we recall some classical definitions and preliminary results that will be needed for the subsequent parts of the work. Section \ref{section_idea_of_the_proof} is devoted to define the principal tools for the proof of Theorem \ref{main_theorem} and to explain the strategy we will follow to prove it. Section \ref{section_estimates_from_below} contains several technical estimates needed to obtain our main result. Finally, in Section \ref{section_proof_of_main_thm}, we present the proof of Theorem \ref{main_theorem}. 

\section{Preliminaries}\label{section_preliminaries}

In this section we fix some notation and recall the basic definitions and results we will employ in the subsequent sections. Throughout the paper we shall denote respectively by $\langle \cdot, \cdot \rangle$ and $\| \cdot \|$ the scalar product and the norm in $L^2(\R)$. For functions depending on $(t,x)$ appearing in the next sections, $\langle \cdot , \cdot \rangle$ and $\| \cdot \|$ will always denote the scalar product and the norm in $L^2(\R_x)$ for a fixed $t \in [0,T]$. 

Given $m\in\R$, we denote by $S^{m}_{0,0}(\R)$ the space of all functions $p \in C^{\infty}(\R^2)$ such that for any $\alpha, \beta \in \N_0$ the following estimate holds 
$$
|\partial^{\alpha}_{\xi} \partial^{\beta}_{x} p(x,\xi)| \leq C_{\alpha,\beta} \langle \xi \rangle^{m}
$$
for a positive constant $C_{\alpha, \beta}.$
The topology of the space $S^{m}_{0,0}(\R)$ is induced by the following family of seminorms
$$
|p|^{(m)}_\ell := \max_{\alpha \leq \ell, \beta \leq \ell} \sup_{x, \xi \in \R} |\partial^{\alpha}_{\xi} \partial^{\beta}_{x} p(x,\xi)| \langle \xi \rangle^{-m}, \quad p \in S^{m}_{0,0}(\R), \, \ell \in \N_0.
$$

As usual we associate to every symbol $p \in S^{m}_{0,0}(\R)$ the continuous operator on the Schwartz space of rapidly decreasing functions $p(x,D): \mathscr{S}(\R) \to \mathscr{S}(\R)$, known as pseudodifferential operator, given by 
$$
p(x,D) u(x) = \int e^{i\xi x} p(x,\xi) \widehat{u}(\xi) \dslash\xi, \quad u \in \mathscr{S}(\R),
$$
where $\dslash \xi := \frac{d\xi}{2\pi}$. Sometimes we will write $p(x,D)={\rm{op}}(p(x,\xi))$. The next Theorem \ref{theorem_Calderon_Vaillancourt} gives the action of operators coming from symbols $S^{m}_{0,0}(\R)$ in the standard Sobolev spaces $H^{s}(\R)$, $s \in \R$, defined by
$$
H^{s}(\R) := \{ u \in \mathscr{S}'(\R) \colon \langle \xi \rangle^{s} \widehat{u}(\xi) \in L^2(\R)\}, \quad \|u\|_{H^{s}(\R)} := \|\langle \xi \rangle^{s} u\|.
$$
We recall that, when $s$ is a positive integer we can replace $\|u\|_{H^{s}(\R)}$ by the equivalent norm 
$$
\|u\|_{s} := \sum_{j=0}^{s} \|D^{j}_{x}u\|.
$$

\begin{theorem}\label{theorem_Calderon_Vaillancourt}[Calder\'on-Vaillancourt]
	Let $p \in S^{m}_{0,0}(\R)$. Then for any real number $s \in \R$ there exist $\ell := \ell(s,m) \in \N_0$ and $C:= C_{s,m} > 0$ such that 
	\begin{equation*}
		\| p(\cdot,D)u \|_{H^{s}(\R)} \leq C |p|^{(m)}_{\ell} \| u \|_{H^{s+m}(\R)}, \quad \forall \, u \in H^{s+m}(\R).
	\end{equation*}
	Besides, when $m = s = 0$ we can replace $|p|^{(m)}_{\ell}$ by
	\begin{equation*}
		\max_{\alpha, \beta \leq 2} \sup_{x, \xi \in \R} |\partial^{\alpha}_{\xi} \partial^{\beta}_{x} p(x,\xi)|.
	\end{equation*}
\end{theorem}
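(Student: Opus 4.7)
The plan is to reduce the general case to $m = s = 0$ and then apply the Cotlar--Stein almost-orthogonality lemma to a phase-space decomposition of the symbol. For the reduction, set $\tilde p(x,\xi) := \langle \xi \rangle^{-m} p(x,\xi)$; by Leibniz one has $\tilde p \in S^0_{0,0}(\R)$ with $|\tilde p|^{(0)}_\ell \leq C_{m,\ell}|p|^{(m)}_\ell$. Because $\langle D \rangle^m$ is a pure Fourier multiplier, direct inspection of the defining oscillatory integrals yields
\begin{equation*}
p(x,D) u = \tilde p(x,D) \langle D \rangle^m u, \qquad u \in \mathscr{S}(\R),
\end{equation*}
so $\|p(x,D) u\|_{H^s(\R)} = \|\tilde p(x,D) \langle D \rangle^m u\|_{H^s(\R)}$ and it suffices to bound $H^s \to H^s$ for symbols in $S^0_{0,0}(\R)$. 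For $s \in \N_0$, iterating the identity $D_x q(x,D) = q(x,D) D_x + (D_x q)(x,D)$ expresses $D^s_x (q(x,D)v)$ as a finite linear combination of terms $(D^\beta_x q)(x,D) D^{s-\beta}_x v$, $0\leq \beta\leq s$, with symbols in $S^0_{0,0}(\R)$ of seminorm $\leq |q|^{(0)}_{\ell+s}$, which reduces the matter to $L^2$-boundedness. For $s < 0$ one dualizes, using that the formal adjoint of an $S^0_{0,0}(\R)$ operator corresponds to a symbol in $S^0_{0,0}(\R)$ with controlled seminorms; non-integer $s$ then follow by complex interpolation.

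The heart of the argument is the $m = s = 0$ case. Fix $\phi \in C^\infty_c(\R)$ with $\sum_{k \in \Z} \phi(\cdot - k)^2 \equiv 1$ and set $\chi_{j,k}(x,\xi) := \phi(x-j)\phi(\xi-k)$. Decompose $p = \sum_{(j,k) \in \Z^2} p_{j,k}$ with $p_{j,k} := p\, \chi_{j,k}$ and put $T_{j,k} := p_{j,k}(x,D)$. Writing the kernel of $T_{j,k}^* T_{j',k'}$ as an oscillatory integral in $(z,\xi,\eta)$, integration by parts twice in $z$ exploits the separation $|k-k'|$ between the $\eta$- and $\xi$-supports to produce $(1+|k-k'|)^{-2}$, while two integrations by parts in $\xi$ and in $\eta$ exploit the separation $|j-j'|$ between the $z$-supports to produce $(1+|j-j'|)^{-2}$. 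Together with the analogous estimate for $T_{j,k} T_{j',k'}^*$ this gives
\begin{equation*}
\|T_{j,k}^* T_{j',k'}\|_{L^2 \to L^2} + \|T_{j,k} T_{j',k'}^*\|_{L^2 \to L^2} \leq C\,(|p|^{(0)}_2)^{2}(1+|j-j'|)^{-2}(1+|k-k'|)^{-2},
\end{equation*}
and the Cotlar--Stein lemma then yields $\|p(x,D)\|_{L^2 \to L^2} \leq C\, |p|^{(0)}_2$, which establishes both the general $L^2$-bound and the sharp order-$2$ refinement stated in the last sentence of the theorem.

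The main obstacle is the integration-by-parts bookkeeping in the kernel estimate. Since $S^0_{0,0}(\R)$ symbols carry no decay in either phase-space direction, every derivative produced by integration by parts must fall on the cutoffs $\chi_{j,k}$ or on an exponential phase factor producing an off-diagonal gain; one must verify that no derivative leaves an unbounded residue in the remaining integral. Using precisely two derivatives per phase-space direction yields the integrable $(1+|j-j'|)^{-2}(1+|k-k'|)^{-2}$ decay required by Cotlar--Stein and simultaneously pins down the seminorm order $\ell = 2$ appearing in the final sentence.
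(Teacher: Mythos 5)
The paper does not actually prove this statement: it is quoted from Kumano-go (Theorem 1.6, p.~224 of \cite{Kumano-Go}), so the only thing to assess is the internal correctness of your sketch. Your reduction of the general $(s,m)$ case to $m=s=0$ is sound: the factorization $p(x,D)=\tilde p(x,D)\langle D\rangle^{m}$ is exact because the multiplier sits on the right of the symbol, the commutator identity handles integer $s\geq 0$ via the equivalent norm $\sum_{j}\|D_x^{j}u\|$, and duality plus interpolation cover the remaining cases (modulo the standard fact that adjoints of $S^{0}_{0,0}$ operators have symbols in $S^{0}_{0,0}$ with controlled seminorms, which you correctly flag as an input).

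The gap is in the Cotlar--Stein step. A small slip first: with $\sum_{k}\phi(\cdot-k)^{2}\equiv 1$ one does not have $\sum_{j,k}\chi_{j,k}\equiv 1$, so $\sum_{j,k}p_{j,k}\neq p$; you want $\sum_{k}\phi(\cdot-k)\equiv 1$ (harmless). The real problem is quantitative: the bound $\|T_{j,k}^{*}T_{j',k'}\|+\|T_{j,k}T_{j',k'}^{*}\|\leq C\,(|p|^{(0)}_{2})^{2}(1+|j-j'|)^{-2}(1+|k-k'|)^{-2}$ is \emph{not} sufficient for Cotlar--Stein, which requires $\sup_{(j,k)}\sum_{(j',k')}\|T_{j,k}^{*}T_{j',k'}\|^{1/2}<\infty$ (and likewise for $TT^{*}$); the square root of your bound is $(1+|j-j'|)^{-1}(1+|k-k'|)^{-1}$, and $\sum_{n\in\Z}(1+|n|)^{-1}=\infty$. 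The disjointness of the $x$-supports does force $T_{j,k}^{*}T_{j',k'}=0$ for $|j-j'|$ large, but that only truncates the $j'$-sum; the $k'$-sum of $(1+|k-k'|)^{-1}$ still diverges. To make the square roots summable you need operator-norm decay strictly faster than $(1+|k-k'|)^{-2}$, hence at least three integrations by parts in $z$, hence (by Leibniz, since all the $z$-derivatives can land on a single copy of $p$) third-order $x$-derivatives of the symbol; the symmetric statement holds for $TT^{*}$. So the argument, once repaired, proves the theorem with seminorms of some order $\ell\geq 3$, which is fine for the first assertion but does not ``pin down'' the order-$2$ refinement in the last sentence. That refinement is the genuinely sharp part of Calder\'on--Vaillancourt and requires a finer argument (Cordes/Coifman--Meyer, or the multiple-symbol induction in \cite{Kumano-Go}) than a unit-cube decomposition combined with naive almost orthogonality.
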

For a proof of Theorem \ref{theorem_Calderon_Vaillancourt} we address the reader to Theorem $1.6$ on page $224$ of \cite{Kumano-Go}. 
Now we consider the algebra properties of $S^{m}_{0,0}(\R)$ with respect to the composition of operators. Let $p_j \in S^{m_j}_{0,0}(\R)$, $j = 1, 2$, and define 
\begin{align}\label{eq_symbol_of_composition}
q(x,\xi) &= Os- \iint e^{-iy \eta} p_1(x,\xi+\eta)p_2(x+y,\xi) dy \dslash\eta \\
		 &= \lim_{\varepsilon \to 0} \iint e^{-iy\eta} p_1(x,\xi+\eta)p_2(x+y,\xi) e^{-\varepsilon^2y^2} e^{-\varepsilon^2\eta^2} dy \dslash\eta. \nonumber
\end{align} 
Then we have the following theorem (for a proof see Lemma $2.4$ on page $69$ and Theorem $1.4$ on page $223$ of \cite{Kumano-Go}).

\begin{theorem}
	Let $p_j \in S^{m_j}_{0,0}(\R)$, $j = 1, 2$, and consider $q$ defined by \eqref{eq_symbol_of_composition}. Then $q \in S^{m_1+m_2}_{0,0}(\R)$ and $q(x,D) = p_1(x,D) p_2(x,D)$. Moreover, the symbol $q$ has the following asymptotic expansion
	\begin{align*}
		q(x,\xi) = \sum_{\alpha < N} \frac{1}{\alpha!} \partial^{\alpha}_{\xi}p_{1}(x,\xi)D^{\alpha}_{x}p_2(x,\xi) + r_N(x,\xi), 
	\end{align*} 
	where 
	$$
	r_N(x,\xi) = N\int_{0}^{1} \frac{(1-\theta)^{N-1}}{N!} \, Os - \iint e^{-iy\eta} \partial^{N}_{\xi} p_1(x,\xi+\theta\eta) D^{N}_{x} p_2(x+y,\xi)  dy\dslash\eta \, d\theta,
	$$
	and the seminorms of $r_N$ may be estimated in the following way: for any $\ell_{0} \in \N_0$ there exists $\ell_{1} := \ell_1(\ell_{0}) \in \N_0$ such that 
	$$
	|r_N|^{(m_1+m_2)}_{\ell_0} \leq C_{\ell_{0}} |\partial^{N}_{\xi}p_1|^{(m_1)}_{\ell_{1}} |\partial^{N}_{x}p_2|^{(m_2)}_{\ell_{1}}.
	$$
\end{theorem}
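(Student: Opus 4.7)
The plan is to prove the three assertions in order: (a) the oscillatory integral defines a symbol $q \in S^{m_1+m_2}_{0,0}(\R)$, (b) $q(x,D) = p_1(x,D) p_2(x,D)$ as operators on $\mathscr{S}(\R)$, and (c) the Taylor-type asymptotic expansion with the stated remainder bound holds. The backbone of the argument is the oscillatory integral machinery, which consists in inserting the regularizer $e^{-\varepsilon^2(y^2 + \eta^2)}$, producing an absolutely convergent integral, and passing to the limit $\varepsilon \to 0$ after gaining decay in $(y,\eta)$ via integration by parts using the identities
\[
(1-\Delta_{\eta})e^{-iy\eta} = \langle y \rangle^{2} e^{-iy\eta}, \qquad (1-\Delta_{y})e^{-iy\eta} = \langle \eta \rangle^{2} e^{-iy\eta}.
\]
For (a), applying $\langle y \rangle^{-2N_2}(1-\Delta_y)^{N_2}$ and $\langle \eta \rangle^{-2N_1}(1-\Delta_\eta)^{N_1}$ to the amplitude $p_1(x,\xi+\eta)p_2(x+y,\xi)$ and choosing $N_1, N_2$ so large that the integrand decays like $\langle y \rangle^{-2} \langle \eta \rangle^{-2}$ after factoring out $\langle \xi+\eta \rangle^{m_1}\langle \xi \rangle^{m_2}$, I can bound the result by $C\langle \xi \rangle^{m_1+m_2}$ using Peetre's inequality $\langle \xi+\eta \rangle^{|m_1|} \leq 2^{|m_1|/2}\langle \xi \rangle^{|m_1|}\langle \eta \rangle^{|m_1|}$; differentiating under the integral in $x$ and $\xi$ gives the corresponding symbol seminorm estimates, so $q \in S^{m_1+m_2}_{0,0}(\R)$.

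For (b), I compute $p_1(x,D) p_2(x,D) u$ on $u \in \mathscr{S}(\R)$. Writing the two pseudodifferential actions out and inserting $\widehat{p_2(\cdot,D)u}(\eta) = \iint e^{-iz\eta + iz\xi} p_2(z,\xi) \widehat{u}(\xi)\, dz\, \dslash\xi$, I obtain a triple oscillatory integral
\[
p_1(x,D)p_2(x,D)u(x) = \iiint e^{i(x-z)\eta + iz\xi} p_1(x,\eta) p_2(z,\xi)\widehat{u}(\xi)\, dz\, \dslash\xi\, \dslash\eta,
\]
which after the change of variables $\eta \mapsto \xi + \eta'$, $z \mapsto x + y'$ becomes $\int e^{ix\xi} q(x,\xi)\widehat{u}(\xi)\, \dslash\xi$ exactly with $q$ given by \eqref{eq_symbol_of_composition}. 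All manipulations are justified through the regularizer and the integration-by-parts identities just mentioned.

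For (c), I Taylor-expand $p_1(x,\xi+\eta)$ in $\eta$ around $0$,
\[
p_1(x,\xi+\eta) = \sum_{\alpha < N} \frac{\eta^{\alpha}}{\alpha!} \partial^{\alpha}_{\xi} p_1(x,\xi) + N\int_0^1 (1-\theta)^{N-1} \frac{\eta^N}{N!} \partial^{N}_{\xi} p_1(x,\xi+\theta\eta)\, d\theta,
\]
plug into \eqref{eq_symbol_of_composition}, and evaluate the finite-sum terms via the key identity
\[
Os\text{-}\iint e^{-iy\eta}\eta^{\alpha} p_2(x+y,\xi)\, dy\, \dslash\eta = D^{\alpha}_{x} p_2(x,\xi),
\]
which follows by writing $\eta^{\alpha} e^{-iy\eta} = (-D_y)^{\alpha} e^{-iy\eta}$, integrating by parts in $y$, and recognizing the resulting $\eta$-integral as a Dirac mass at $y = 0$. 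This produces the stated principal terms plus the claimed $r_N$. To bound $|r_N|^{(m_1+m_2)}_{\ell_0}$, I again apply $(1-\Delta_\eta)^{k_1}$ and $(1-\Delta_y)^{k_2}$ via the integration-by-parts identities: the former creates factors of $\langle y \rangle^{-2k_1}$ and differentiates $\partial^{N}_{\xi} p_1$ further in $\xi$; the latter creates factors of $\langle \eta \rangle^{-2k_2}$ and differentiates $D^{N}_{x} p_2$ further in $x$. Choosing $k_1, k_2$ large enough makes the integral absolutely convergent and yields a bound by a product of seminorms of $\partial^{N}_{\xi} p_1$ and $\partial^{N}_{x} p_2$ of some order $\ell_1(\ell_0)$, which is precisely the required estimate.

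The main obstacle is the clean bookkeeping of the integration-by-parts: one must carefully count how many $\eta$ and $y$ derivatives fall on each factor and keep track of the dependence of the constant $C_{\ell_0}$ on the number of $(x,\xi)$ derivatives one takes on $q$ and $r_N$, while still producing bounds in terms of only $\partial^{N}_{\xi} p_1$ and $\partial^{N}_{x} p_2$ rather than general mixed derivatives. Once this organization is fixed (the standard trick is to apply $x$- and $\xi$-derivatives to $q$ first, and only then perform the integration by parts in $(y,\eta)$), the estimates reduce to bounded, uniform-in-$\xi$ integrals and the proof is complete.
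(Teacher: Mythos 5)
Your outline is correct, but note that the paper does not prove this theorem at all: it is quoted as a known result with a pointer to Kumano-Go (Lemma 2.4, p.~69 and Theorem 1.4, p.~223), and your argument is precisely the standard oscillatory-integral proof given there (regularization, integration by parts via $(1-\Delta_\eta)e^{-iy\eta}=\langle y\rangle^2e^{-iy\eta}$ and $(1-\Delta_y)e^{-iy\eta}=\langle\eta\rangle^2e^{-iy\eta}$, Peetre's inequality, Taylor expansion in $\eta$, and the identity reducing $Os$-$\iint e^{-iy\eta}\eta^\alpha p_2(x+y,\xi)\,dy\,\dslash\eta$ to $D_x^\alpha p_2(x,\xi)$). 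The only blemish is a harmless mismatch of weights in your part (a): the factor $\langle y\rangle^{-2N}$ is the one produced by $(1-\Delta_\eta)^{N}$, and $\langle\eta\rangle^{-2N}$ by $(1-\Delta_y)^{N}$, not the other way around.
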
 

The last theorem that we recall is a version of the so-called sharp G{\aa}rding inequality which takes into account the polynomial behavior at infinity on $|x|$ of the symbol. To state this result it is convenient to introduce the $SG$ symbol classes. Given $m_1, m_2 \in \R$, we say that a smooth function $p \in C^{\infty}(\R^{2})$ belongs to the class $SG^{m_1,m_2}(\R^{2})$ if for every $\alpha, \beta \in \N_0$ there exists a positive constant $C_{\alpha,\beta} > 0$ such that 
$$
|\partial^{\alpha}_{\xi} \partial^{\beta}_{x} p(x,\xi)| \leq C_{\alpha,\beta} \langle \xi \rangle^{m_1-\alpha} \langle x \rangle^{m_2-\beta}.
$$
Let $p \in SG^{m_1,m_2}(\R^{2})$, then we consider the following special type of Friedrichs' symmetrization 
\begin{equation}\label{eq_friedrics_part_symbol}
p_{F,L}(x,\xi) = Os-\iint e^{-iy\eta} p_{F}(\xi+\eta, x+y, \xi) dy \dslash\eta,
\end{equation}
where 
$$
p_{F}(\xi,x',\xi') = \int F(x',\xi,\zeta) p(x',\zeta) F(x',\xi',\zeta) d\zeta, \quad x', \xi, \xi' \in \R,
$$
$$
F(x', \xi, \zeta) = q(\langle x' \rangle^{\frac{1}{2}}\langle \xi \rangle^{-\frac{1}{2}} (\xi - \zeta)) \langle \xi \rangle^{-\frac{1}{4}} \langle x' \rangle^{\frac{1}{4}}, \quad x',\xi, \zeta \in \R,
$$
for some even cutoff function $q \in C^{\infty}_{0}(\R)$ such that $\int q^2 = 1$ (cf. Definition $11$ of \cite{AC_SG_Sharp_Garding}). Then we have the following result (see Theorem $4$ and Proposition $6$ of \cite{AC_SG_Sharp_Garding}).

\begin{theorem}\label{theorem_SG_sharp_garding}
	Let $p \in SG^{m_1,m_2}(\R)$ and let moreover $p_{F,L}$ be the symbol defined by $\eqref{eq_friedrics_part_symbol}$. Then $p_{F,L} \in SG^{m_1,m_2}(\R)$ and $p-p_{F,L} \in SG^{m_1-1,m_2-1}(\R)$. Moreover, if $p(x,\xi) \geq 0$ then $\langle p_{F,L}(x,D) u, u \rangle \geq 0$ for all $u \in \mathscr{S}(\R)$.
\end{theorem}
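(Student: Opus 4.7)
The plan is to establish the three assertions of Theorem~\ref{theorem_SG_sharp_garding} in the order they are stated: first the symbol estimate $p_{F,L} \in SG^{m_1,m_2}(\R)$, then the refinement $p - p_{F,L} \in SG^{m_1-1,m_2-1}(\R)$, and finally the positivity. The starting observation is that the cutoff factor $q(\langle x'\rangle^{1/2}\langle\xi\rangle^{-1/2}(\xi-\zeta))$ in the Friedrichs symbol $F$ restricts the inner integration in $\zeta$ to the region where $|\xi-\zeta|\lesssim \langle x'\rangle^{-1/2}\langle\xi\rangle^{1/2}$, and similarly for $\xi'$. On this region $\langle\zeta\rangle \sim \langle\xi\rangle \sim \langle\xi'\rangle$, so every differentiation in $\zeta$ hitting $p$ produces a factor $\langle\xi\rangle^{m_1-k}$ (not $\langle\zeta\rangle^{m_1-k}$ in some worse region), while differentiations in $x'$ give $\langle x'\rangle^{m_2-k}$ from the $SG$ bounds on $p$.

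For part one, I would differentiate the defining formula $p_{F,L}(x,\xi) = Os\text{-}\iint e^{-iy\eta}p_F(\xi+\eta,x+y,\xi)\,dy\,\dslash\eta$ under the oscillatory integral, integrate by parts repeatedly against $e^{-iy\eta}$ to absorb the factors $y^N,\eta^N$ into derivatives of $p_F$, and combine this with the symbol estimates on $p_F$ obtained from the previous paragraph. This yields the required bounds $|\partial_\xi^\alpha\partial_x^\beta p_{F,L}(x,\xi)| \lesssim \langle\xi\rangle^{m_1-\alpha}\langle x\rangle^{m_2-\beta}$ and shows $p_{F,L}\in SG^{m_1,m_2}(\R)$.

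For part two, Taylor-expand $p_F(\xi+\eta,x+y,\xi)$ in $(\eta,y)$ around $(0,0)$ up to order one, using the identities $Os\text{-}\iint e^{-iy\eta}\,dy\,\dslash\eta = 1$, $Os\text{-}\iint e^{-iy\eta}\eta\,dy\,\dslash\eta = 0$, $Os\text{-}\iint e^{-iy\eta}y\,dy\,\dslash\eta = 0$, so that
\begin{equation*}
p_{F,L}(x,\xi) = p_F(\xi,x,\xi) + r(x,\xi),
\end{equation*}
where $r$ is a controlled remainder. I would then check that each remainder term, carrying a product $\partial_{\xi'}^\alpha\partial_{x'}^\beta p_F(\xi,x,\xi)$ with $\alpha+\beta\ge 2$, lies in $SG^{m_1-1,m_2-1}$: differentiation in $\xi'$ or $x'$ costs $\langle\xi\rangle^{-1/2}\langle x\rangle^{1/2}$ or $\langle x\rangle^{-1}$ respectively (the first from differentiating through $F$ on the support of $q$), and after two such derivatives one always loses at least one factor each of $\langle\xi\rangle$ and $\langle x\rangle$. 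For the leading term, change variables $s = \langle x\rangle^{1/2}\langle\xi\rangle^{-1/2}(\xi-\zeta)$ to rewrite
\begin{equation*}
p_F(\xi,x,\xi) = \int q(s)^2 \, p\bigl(x,\xi - s\langle x\rangle^{-1/2}\langle\xi\rangle^{1/2}\bigr)\,ds,
\end{equation*}
Taylor-expand $p$ in $\zeta$ around $\zeta=\xi$, exploit $\int q^2 = 1$ and the evenness of $q$ (which annihilates the linear term), and bound the quadratic remainder by $\langle x\rangle^{-1}\langle\xi\rangle \cdot \langle\xi\rangle^{m_1-2}\langle x\rangle^{m_2} = \langle\xi\rangle^{m_1-1}\langle x\rangle^{m_2-1}$. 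Hence $p_F(\xi,x,\xi) - p(x,\xi) \in SG^{m_1-1,m_2-1}$, and combining with the remainder $r$ gives the full claim.

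For part three, I would show that $p_{F,L}(x,D)$ admits a sesquilinear representation as an integral superposition: exchanging the order of integrations and using the Fourier identity $Os\text{-}\iint e^{-iy\eta}\Psi(\xi+\eta,x+y)\,dy\,\dslash\eta = \Psi(\xi,x)$ applied to $\Psi(\xi',x') := \int F(x',\xi',\zeta)p(x',\zeta)F(x',\xi,\zeta)\,d\zeta$, one realizes $p_{F,L}(x,D)$ as
\begin{equation*}
\langle p_{F,L}(x,D)u,u\rangle = \iint p(x',\zeta)\,\|\Phi_{x',\zeta}u\|^2\,dx'\,d\zeta,
\end{equation*}
where $\Phi_{x',\zeta}$ is the auxiliary operator whose integral kernel is built from $F(x',\cdot,\zeta)$ and the Fourier kernel $e^{ix'\xi}$; the precise bookkeeping reproduces the derivation in Theorem~4 of \cite{AC_SG_Sharp_Garding}. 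Because $p(x',\zeta)\ge 0$ pointwise by hypothesis, the integrand is non-negative and the inequality follows.

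The main obstacle will be part two, specifically verifying that the remainder gains one full order of decay in $\langle x\rangle$ and not only in $\langle\xi\rangle$. The delicate point is that $\partial_{x'}F$ produces both a $\langle x'\rangle^{-1}$ factor and a term where the scaling argument of $q$ is differentiated, which a priori contributes $\langle x'\rangle^{-1/2}\langle\xi\rangle^{1/2}(\xi-\zeta)$; one must show that on the support of $q$ this is harmless because $|\xi-\zeta|\lesssim \langle x'\rangle^{-1/2}\langle\xi\rangle^{1/2}$, so the net contribution is still $\langle x'\rangle^{-1}\langle\xi\rangle$, compatible with the $SG^{m_1-1,m_2-1}$ target after the extra derivative of $p$ is used.
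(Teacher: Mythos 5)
First, a point of reference: the paper does not prove this statement at all — it is imported from Theorem 4 and Proposition 6 of \cite{AC_SG_Sharp_Garding} — so your proposal is being measured against that reference rather than against anything in the present text. Your overall architecture (support localization via $q$, reduction of the double symbol $p_F$ to the single symbol $p_{F,L}$, Taylor expansion of the leading term with the evenness of $q$ killing the linear contribution, and the sesquilinear representation $\iint p(x',\zeta)\,|(\Phi_{x',\zeta}u)(x',\zeta)|^2\,dx'\,d\zeta$ for positivity, which uses that $F$ is real) is the standard Friedrichs-symmetrization argument and is the correct skeleton; parts one and three are essentially right as sketched.

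There is, however, a genuine gap in part two. The nontrivial terms in the expansion of $p_{F,L}$ are the diagonal ones $\frac{1}{j!}\partial_\xi^j D_{x'}^j p_F(\xi,x',\xi)\big|_{x'=x}$, and by your own cost accounting one $\xi$-derivative of $F$ costs $\langle x\rangle^{1/2}\langle\xi\rangle^{-1/2}$ while one $x'$-derivative costs $\langle x\rangle^{-1}$; hence the $j=1$ term is a priori only $O(\langle\xi\rangle^{m_1-1/2}\langle x\rangle^{m_2-1/2})$, not $O(\langle\xi\rangle^{m_1-1}\langle x\rangle^{m_2-1})$. Your claim that ``after two such derivatives one always loses at least one factor each of $\langle\xi\rangle$ and $\langle x\rangle$'' does not survive this arithmetic: one derivative of each type loses only half a factor of each. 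The missing ingredient is a second cancellation, parallel to the evenness of $q$ that you already invoke for the zeroth-order term: since $\int F(x',\xi,\zeta)^2\,d\zeta=1$ identically in $(x',\xi)$, one has $\int\partial_\xi(F^2)\,d\zeta=\int\partial_{x'}\partial_\xi(F^2)\,d\zeta=0$, and writing $\partial_\xi F\cdot F=\tfrac12\partial_\xi(F^2)$ at $\xi'=\xi$ one may replace $p(x',\zeta)$ (resp.\ $\partial_{x'}p(x',\zeta)$) by the difference $p(x',\zeta)-p(x',\xi)$ (resp.\ the analogous one) inside the $\zeta$-integral. On the support of $q$ this difference is $O\bigl(|\zeta-\xi|\,\langle\xi\rangle^{m_1-1}\langle x\rangle^{m_2}\bigr)=O\bigl(\langle x\rangle^{-1/2}\langle\xi\rangle^{1/2}\cdot\langle\xi\rangle^{m_1-1}\langle x\rangle^{m_2}\bigr)$, which supplies exactly the extra factor $\langle\xi\rangle^{-1/2}\langle x\rangle^{-1/2}$ needed to place the first-order term in $SG^{m_1-1,m_2-1}$. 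Without this step the second assertion of the theorem is not established.
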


\section{Idea of the proof}\label{section_idea_of_the_proof}

To prove our result we follow an argument inspired by \cite{dreher}. We shall prove that if the Cauchy problem \eqref{cpp} is well-posed in $\mathcal{H}^{\infty}_{\theta}(\R)$, then the denial of \eqref{equation_thesis_main_theorem} leads to a contradiction. With this idea in mind let us start by defining the main ingredients to get the desired contradiction. Consider a Gevrey cutoff function $h \in G^{\theta_h}_{0}(\R)$ for some $\theta_h > 1$ close to $1$ such that
$$
h(x) =
\begin{cases}
	1, \quad |x| \leq \frac{1}{2}, \\
	0, \quad |x| \geq 1.
\end{cases}
$$
For a sequence $\{\nu_k\}$ of positive real numbers such that $\nu_k \to \infty$ as $k \to \infty$, we define the following sequence of symbols 
\begin{equation}\label{eq_def_localization}
	w_{k}(x,\xi) = h\left( \frac{ x-4\nu^{p-1}_{k} }{ \nu^{p-1}_{k} } \right) h\left(  \frac{\xi - \nu_k}{ \frac{1}{4}\nu_{k} } \right).
\end{equation}
Note that $w_{k}(x,\xi)$ is a symbol localized around the bicharacteristic curve of $\xi^{p}$ passing through the point $(0,\nu_k)$ at some fixed time $t$ (in this case $t = 4/p$). Indeed, the bicharacteristic curve of $\xi^p$ (usually called {\it Hamilton flow} generated by the operator $D_t+D_x^p$) passing through a point $(x_0, \xi_0) \in \R^{2}$,  is the solution of 
\begin{equation}\label{flow}
\begin{cases}
	x'(t) = p\xi(t)^{p-1},\,\, x(0) = x_0, \\
	\xi'(t) = 0,\,\, \xi(0) = \xi_0,
\end{cases}
\end{equation}
that is
$(x(t),\xi(t)) = (x_0+pt\xi_0^{p-1}, \xi_0).$

\begin{remark}\label{remark_x_xi_equiv_sigma_k}
	On the support of $w_{k}(\cdot, \cdot)$ we have that $\xi$ is comparable with $\nu_k$ and $x$ is comparable with $\nu^{p-1}_{k}$. Indeed, if $\xi \in \supp w_{k}(x,\cdot)$ then
	$$
	 	|\xi - \nu_k| \leq \frac{1}{4}\nu_k  \iff \frac{3\nu_k}{4} \leq \xi \leq \frac{5\nu_k}{4},
	$$
	for all $k\in \N_0$. Similarly, if $x \in \supp w_k(\cdot, \xi)$ then 
	$$
		|x-4\nu^{p-1}_k| \leq \nu^{p-1}_k \iff  3\nu^{p-1}_k \leq x \leq 5\nu_k^{p-1},
	$$
	for all $k \in \N_0$. 
\end{remark}

Consider now $\phi \in G^{\theta}(\R)$ such that 
$$
\widehat{\phi}(\xi) = e^{-2\rho_0 \langle \xi \rangle^{\frac{1}{\theta}}},
$$
for some $\rho_0 > 0$ and define $\phi_k(x) = \phi(x-4\nu^{p-1}_{k})$ for all $k \in \N_0$. From the assumed $\mathcal{H}^{\infty}_{\theta}$ well-posedness, let $u_k \in C^1([0,T];H^{0}_{\rho;\theta}(\R))$ be the solution of \eqref{cpp} with initial datum $\phi_k\in H^{0}_{\rho_0;\theta}(\R)$. Then for $\lambda \in (0,1)$ and $\theta_1 > \theta_h$ (still close to $1$) to be chosen later, we define
\begin{equation}\label{eq_definition_of_N_k}
N_k = \lfloor \nu^{\frac{\lambda}{\theta_1}}_{k} \rfloor
\end{equation}
and then we introduce the energy 
\begin{equation}\label{eq_def_energy}
E_{k}(t) = \sum_{\alpha \leq N_k, \beta \leq N_k} \frac{1}{(\alpha!\beta!)^{\theta_1}} \| w^{(\alpha\beta)}_{k} (x,D) u_k(t,x) \| = \sum_{\alpha \leq N_k, \beta \leq N_k} E_{k,\alpha,\beta}(t),
\end{equation}
where 
\begin{equation*}
	w^{(\alpha\beta)}_{k}(x,\xi) = h^{(\alpha)}\left( \frac{x-4\nu^{p-1}_{k}}{ \nu^{p-1}_{k} } \right)
	h^{(\beta)} \left( \frac{\xi - \nu_k}{ \frac{1}{4}\nu_{k} } \right).
\end{equation*}

The next lemma, whose proof follows from Remark \ref{remark_x_xi_equiv_sigma_k} and a simple computation, gives estimates for the norms of $w_k$.

\begin{lemma}\label{lemma_estimates_w_k}
	Let $\alpha, \beta, \gamma, \delta, \mu, \ell \in \N_{0}$. Then $w^{(\alpha\beta)}_{k} \in S^{0}_{0,0}(\R^2)$ and there exists $C := C(\theta_h)$ such that 
	$$
	|\xi^{\mu} \partial^{\delta}_{x} \partial^{\gamma}_{\xi} w^{(\alpha\beta)}_{k}(x,\xi)|^{(0)}_{\ell} \leq C^{\alpha+\beta+\gamma+\delta+\mu+\ell+1} (\alpha!\beta!\gamma!\delta!\ell!^2)^{\theta_h} \nu^{\mu-\gamma}_{k} \nu^{-\delta(p-1)}_{k}.
	$$
\end{lemma}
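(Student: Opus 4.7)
\textbf{Proof plan for Lemma \ref{lemma_estimates_w_k}.} My plan is a direct computation via the chain rule, followed by careful bookkeeping of the factorials and powers of $\nu_k$.

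First I would differentiate $w_k^{(\alpha\beta)}$ explicitly. Writing $u := (x - 4\nu_k^{p-1})/\nu_k^{p-1}$ and $v := (\xi - \nu_k)/(\nu_k/4)$, the chain rule yields
$$
\partial_x^{\delta}\partial_\xi^{\gamma} w_k^{(\alpha\beta)}(x,\xi) = \nu_k^{-\delta(p-1)}\, 4^{\gamma}\nu_k^{-\gamma}\; h^{(\alpha+\delta)}(u)\, h^{(\beta+\gamma)}(v),
$$
since each $x$-derivative pulls out a factor $\nu_k^{-(p-1)}$ and each $\xi$-derivative pulls out $4\nu_k^{-1}$. Multiplying by $\xi^{\mu}$ and applying $\partial_x^{\beta'}\partial_\xi^{\alpha'}$ with $\alpha',\beta'\le\ell$, one more $x$-derivative on $h^{(\alpha+\delta)}(u)$ produces a further $\nu_k^{-(p-1)}$ factor, while Leibniz expands the $\xi$-derivative across the product $\xi^{\mu}\,h^{(\beta+\gamma)}(v)$:
$$
\partial_\xi^{\alpha'}\bigl[\xi^{\mu} h^{(\beta+\gamma)}(v)\bigr] = \sum_{k=0}^{\min(\alpha',\mu)} \binom{\alpha'}{k}\,\frac{\mu!}{(\mu-k)!}\,\xi^{\mu-k}\, 4^{\alpha'-k}\nu_k^{-(\alpha'-k)}\, h^{(\beta+\gamma+\alpha'-k)}(v).
$$

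Next I would insert the Gevrey bound $|h^{(n)}(y)|\le C_0^{n+1}n!^{\theta_h}$ (valid since $h\in G^{\theta_h}_0(\R)$), together with the support information from Remark \ref{remark_x_xi_equiv_sigma_k}, which gives $|\xi|\le (5/4)\nu_k$ on $\supp h^{(\beta+\gamma)}(v)$ and hence $|\xi|^{\mu-k}\le (5/4)^{\mu}\nu_k^{\mu-k}$. Collecting the $\nu_k$ powers produces exactly the target factor $\nu_k^{\mu-\gamma}\nu_k^{-\delta(p-1)}$, multiplied by extra $\nu_k^{-\alpha'}\nu_k^{-\beta'(p-1)}\le 1$ factors that only improve the estimate. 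To absorb the combinatorial factor $\binom{\alpha'}{k}\frac{\mu!}{(\mu-k)!}$ I would use $\binom{\alpha'}{k}\le 2^{\alpha'}\le 2^{\ell}$ and the identity $\mu^k/k!\le e^{\mu}$, giving $\mu!/(\mu-k)!\le \mu^k\le e^{\mu}k!\le e^{\mu}\ell!$. The sum in $k$ then has at most $\ell+1$ terms, contributing another $(\ell+1)\le 2^{\ell}$.

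Finally I would clean the factorials using the standard splitting $(m+n)!^{\theta_h}\le 2^{\theta_h(m+n)}m!^{\theta_h}n!^{\theta_h}$, applied first to $(\alpha+\delta+\beta')!^{\theta_h}$ and to $(\beta+\gamma+\alpha'-k)!^{\theta_h}$, so as to separate one $(\alpha!\beta!\gamma!\delta!)^{\theta_h}$ block from a $(\alpha'!\beta'!)^{\theta_h}\le \ell!^{2\theta_h}$ block, which matches the right-hand side of the claimed inequality. All remaining scalar factors ($2^{\ell}$, $4^{\gamma}$, $C_0^{\cdots}$, $e^{\mu}$, $(5/4)^{\mu}$) are absorbed into a single constant raised to the power $\alpha+\beta+\gamma+\delta+\mu+\ell+1$.

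The only delicate point — and the main obstacle — is the Leibniz term: one must avoid an estimate of the form $\mu^{k}$ that would leave a factor growing in $\mu$ without a matching $C^{\mu}$. The trick $\mu^{k}\le e^{\mu}k!$ resolves this cleanly, at the cost of an extra $k!\le \ell!$ which is precisely what the $\ell!^{2\theta_h}$ factor on the right-hand side is designed to accommodate. The fact that $w_k^{(\alpha\beta)}\in S^{0}_{0,0}(\R^{2})$ follows from the case $\mu=\gamma=\delta=0$ together with the elementary observation that $\nu_k$ is a fixed parameter for each $k$, so all $x,\xi$-derivatives are uniformly bounded.
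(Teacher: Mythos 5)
Your computation is precisely the ``simple computation'' the paper alludes to: it gives no written proof of Lemma \ref{lemma_estimates_w_k} beyond invoking Remark \ref{remark_x_xi_equiv_sigma_k}, and your route (chain rule for the rescaled arguments, the Gevrey bound $|h^{(n)}|\le C_0^{n+1}n!^{\theta_h}$, the support localization $\tfrac34\nu_k\le\xi\le\tfrac54\nu_k$ to trade $\xi^{\mu}$ for $\nu_k^{\mu}$, Leibniz, and factorial splitting) is exactly the intended argument; the power count in $\nu_k$ is correct, with the extra factors $\nu_k^{-\alpha'}\nu_k^{-\beta'(p-1)}\le 1$ harmless since one may assume $\nu_k\ge 1$. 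One bookkeeping point should be repaired, though: as written you pay $\beta'!^{\theta_h}$ and $\alpha'!^{\theta_h}$ from the two Gevrey factors \emph{and} an additional plain $k!\le\ell!$ from the bound $\mu!/(\mu-k)!\le e^{\mu}k!$, which yields $\ell!^{2\theta_h}\cdot\ell!$ and overshoots the stated budget $(\ell!^{2})^{\theta_h}$. The overshoot disappears if you do not discard the $1/k!$ hidden in $\binom{\alpha'}{k}$, or equivalently if you pair the extra $k!$ with the factor $(\alpha'-k)!^{\theta_h}$ coming from $h^{(\beta+\gamma+\alpha'-k)}$ before enlarging it to $\alpha'!^{\theta_h}$: since $k!\,(\alpha'-k)!\le\alpha'!$ and $\theta_h\ge 1$, one gets $k!\,(\alpha'-k)!^{\theta_h}\le\bigl(k!\,(\alpha'-k)!\bigr)^{\theta_h}\le\alpha'!^{\theta_h}\le\ell!^{\theta_h}$, so the total $\ell$-dependent factor is $\ell!^{2\theta_h}$, exactly as claimed. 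With that one-line adjustment (and modulo the harmless clash of your Leibniz index $k$ with the sequence index $k$), your proof is complete and matches the paper's intended argument.
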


Assuming that the Cauchy problem \eqref{cpp} is $\mathcal{H}^{\infty}_{\theta}$ well-posed, by the energy estimate for the solution (see Definition 1) we simply obtain a uniform upper bound for the energies $E_k(t)$ with respect to both $k \in \N_0$ and $t \in [0,T]$. Indeed, Calder\'on-Vaillancourt theorem implies
 (for the same constant $C$ of Lemma \ref{lemma_estimates_w_k}) that
 $$
 E_{k,\alpha,\beta}(t)\leq C^{\alpha+\beta+1}(\alpha!\beta!)^{\theta_h-\theta_1}  \|u_k(t)\| \leq C^{\alpha+\beta+1}(\alpha!\beta!)^{\theta_h-\theta_1}  \|u_k(t)\|_{H^0_{\rho,\theta}(\R)}
 $$
 and then from the $\mathcal{H}^{\infty}_{\theta}$ well-posedness 
 \begin{eqnarray}\label{Ekab}
 E_{k,\alpha,\beta}(t)&\leq& C_{T,\rho_0}C^{\alpha+\beta}(\alpha!\beta!)^{\theta_h-\theta_1}  \|\phi_k\|_{H^0_{\rho_0,\theta}(\R)}
 \\\nonumber
 &=&C_{T,\rho_0}C^{\alpha+\beta}(\alpha!\beta!)^{\theta_h-\theta_1}  \|\phi\|_{H^0_{\rho_0,\theta}(\R)}
 \\\nonumber&=& C_{T,\rho_0,\phi}C^{\alpha+\beta}(\alpha!\beta!)^{\theta_h-\theta_1} . 
 \end{eqnarray}
Recalling that $\theta_1 > \theta_h$, we conclude that
\begin{align}\label{eq_estimate_from_above_of_energy_k}
	E_{k}(t) &\leq C_{T,\rho_0,\phi}\sum_{\alpha \leq N_k, \beta \leq N_k}  C^{\alpha+\beta}(\alpha!\beta!)^{\theta_h-\theta_1}  \leq C_{T,\rho_0,\phi}\sum_{\alpha, \beta \geq 0}  C^{\alpha+\beta}(\alpha!\beta!)^{\theta_h-\theta_1}  \\ \nonumber
	&= C_{1} C_{T,\rho_0,\phi}, \quad \forall \, t \in [0,T], k \in \N_0.
\end{align}

The idea to get a contradiction is to use the energy method to obtain an estimate from below for $E_k(t)$, $t \in [0,T]$, of the type
$$
E_k(t) \geq f(\nu_k),
$$
where $f(\nu_k) \to +\infty$ if \eqref{equation_thesis_main_theorem} does not hold. Obtaining such estimate is the most involving part of the proof, so we shall dedicate the next section to deal with this problem. 

\section{Estimates from below for $E_k(t)$}\label{section_estimates_from_below}

We denote
\begin{equation}\label{vk}
v^{(\alpha\beta)}_{k}(t,x) = w^{(\alpha\beta)}_{k}(x,D)u_k(t,x).
\end{equation}
Then we have
$$
Pv^{(\alpha\beta)}_{k} = w^{(\alpha\beta)}_{k} \underbrace{P u_k}_{= 0} + [P, w^{(\alpha\beta)}_{k} ]u_k = [P, w^{(\alpha\beta)}_{k} ]u_k =: f^{(\alpha\beta)}_{k},
$$
and therefore 
\begin{align}\label{e}
	\|v^{(\alpha\beta)}_{k}\| &\partial_t \|v^{(\alpha\beta)}_{k}\| = \frac{1}{2}\partial_t \{ \| v^{(\alpha\beta)}_{k} \|^{2}\} \\\nonumber
	&= \text{Re}\, \langle \partial_t v^{(\alpha\beta)}_{k}, v^{(\alpha\beta)}_{k} \rangle \\\nonumber
	&= \text{Re}\, \langle if^{(\alpha\beta)}_{k}, v^{(\alpha\beta)}_{k} \rangle  - \underbrace{\text{Re}\, \langle ia_p(t)D_x^p v^{(\alpha\beta)}_{k}, v^{(\alpha\beta)}_{k} \rangle}_{=0} - \sum_{j=1}^{p} \text{Re}\, \langle ia_{p-j}(t,x) D^{p-j}_{x} v^{(\alpha\beta)}_{k}, v^{\alpha\beta}_{k} \rangle \\\nonumber
	&\geq - \| f^{(\alpha\beta)}_{k}\|\| v^{(\alpha\beta)}_{k} \| - \sum_{j=1}^{p-1} \text{Re}\, \langle ia_{p-j}(t,x) D^{p-j}_{x} v^{(\alpha\beta)}_{k}, v^{\alpha\beta}_{k} \rangle - C_{a_0} \|v^{(\alpha\beta)}_{k}\|^{2}.
\end{align} 

The terms $\text{Re}\, \langle ia_{p-j}(t,x) D^{p-j}_{x} v^{(\alpha\beta)}_{k}, v^{(\alpha\beta)}_{k} \rangle$ ($j=1, \ldots, p-1$) will provide the contradiction, whereas $\| f^{(\alpha\beta)}_{k}\|$ will be negligible in some sense. Being more precise, we shall obtain estimates from above for $\| f^{(\alpha\beta)}_{k}\|$ and absorb them into an estimate from below for the terms $\text{Re}\, \langle  ia_{p-j}(t,x) D^{p-j}_{x} v^{(\alpha\beta)}_{k}, v^{(\alpha\beta)}_{k} \rangle$. In the next two subsections we shall discuss these estimates.

\subsection{Estimate from below for $\text{Re} \, \langle ia_{p-j}(t,x) D^{p-j}_{x} v^{(\alpha\beta)}_{k} , v^{(\alpha\beta)}_{k} \rangle$.}\label{subs_stime}
For $j = 1, \ldots, p-1$ we write 
\begin{align*}
	- \text{Re}\, \langle &ia_{p-j} D^{p-j}_{x} v^{(\alpha\beta)}_{k} , v^{(\alpha\beta)}_{k} \rangle = \text{Re}\, \langle \text{Im}\, a_{p-j} D^{p-j}_{x} v^{(\alpha\beta)}_{k} , v^{(\alpha\beta)}_{k} \rangle - \text{Re} \, \langle i\text{Re}\,a_{p-j} D^{p-j}_{x} v^{(\alpha\beta)}_{k} , v^{(\alpha\beta)}_{k} \rangle \\
	&= \text{Re} \, \langle \text{Im}\, a_{p-j} D^{p-j}_{x} v^{(\alpha\beta)}_{k} , v^{(\alpha\beta)}_{k} \rangle + \frac{1}{2}\sum_{s=0}^{p-j-1} \binom{p-j}{s} \langle i D^{p-j-s}_{x}\text{Re}\, a_{p-j} D^{s}_{x} v^{(\alpha\beta)}_{k}, v^{(\alpha\beta)}_{k} \rangle.
\end{align*}
Then we consider the following cutoff functions 
\begin{equation}\label{eq_cutoff_functions_to_split_the_supports}
	\chi_{k}(\xi) = h \left( \frac{\xi - \nu_k}{ \frac{3}{4}\nu_{k}} \right), \quad \psi_{k}(x) = h \left( \frac{x-4\nu^{p-1}_{k}}{3\nu^{p-1}_{k}} \right).
\end{equation}
Note that on the support of $\psi_{k}(x) \chi_{k}(\xi)$ we have the following
$$
|\xi - \nu_k| \leq \frac{3}{4} \nu_{k} \iff \frac{\nu_k}{4} \leq \xi \leq \frac{7\nu_k}{4},
$$ 
$$
|x-4\nu^{p-1}_{k}| \leq 3\nu^{p-1}_{k}  \iff \nu^{p-1}_{k} \leq x \leq 7 \nu^{p-1}_{k},
$$
for all $k \in \N_0$. Therefore,
\begin{equation}\label{st}
(x,\xi) \in \supp \psi_{k}(x) \chi_{k}(\xi) \;\Rightarrow \; \xi^{p-j} \geq \frac{\nu^{p-j}_{k}}{4^{p-j}}, \quad \langle x \rangle^{-\sigma_{p-j}} \geq 7^{-\sigma_{p-j}} \langle \nu^{p-1}_{k} \rangle^{-\sigma_{p-j}},
\end{equation}
Denoting \begin{equation}\label{c_0}
	c_{p-j} = A\frac{7^{-\sigma_{p-j}}}{4^{p-j}},
\end{equation} we decompose the symbol of $\text{Im}\, a_{p-j}(t,x) D^{p-j}_{x}$ as follows 
\begin{align*}
	\text{Im}\,a_{p-j}(t,x) \xi^{p-j} &= c_{p-j}\langle \nu^{p-1}_{k} \rangle^{-\sigma_{p-j}} \nu^{p-j}_{k} + \left( \text{Im}\,a_{p-j}(t,x) \xi^{p-j} - c_{p-j}\langle \nu^{p-1}_{k} \rangle^{-\sigma_{p-j}} \nu^{p-j}_{k} \right)\\
	&= c_{p-j}\langle \nu^{p-1}_{k} \rangle^{-\sigma_{p-j}} \nu^{p-j}_{k} +  \left(\text{Im}\,a_{p-j}(t,x) \xi^{p-j} - c_{p-j}\langle \nu^{p-1}_{k} \rangle^{-\sigma_{p-j}} \nu^{p-j}_{k}\right)\psi_{k}(x) \chi_{k}(\xi) \\
	&+ \left( \text{Im}\,a_{p-j}(t,x) \xi^{p-j} - c_{p-j}\langle \nu^{p-j}_{k} \rangle^{-\sigma_{p-j}} \nu^{p-j}_{k}\right)\left(1-\psi_{k}(x)\chi_{k}(\xi)\right).
\end{align*}

Using assumption (i) we get for $k$ sufficiently large
\begin{equation}\label{uso i}
\text{Im}\, a_{p-j}(t,x) \xi^{p-j} \geq 
c_{p-j} \langle \nu^{p-1}_{k} \rangle^{-\sigma_{p-j}} \nu^{p-j}_{k} + I_{p-j,k}(x,\xi) + J_{p-j,k}(t,x,\xi), 
\end{equation}
where
\begin{eqnarray}
\label{i2k}
 I_{p-j,k}(x,\xi) &=& \left(A \langle x \rangle^{-\sigma_{p-j}} \xi^{p-j} - c_{p-j}\langle \nu^{p-1}_{k} \rangle^{-\sigma_{p-j}} \nu^{p-j}_{k}\right)\psi_{k}(x) \chi_{k}(\xi),
 \\\label{i3k}
 J_{p-j,k}(t,x,\xi) &=& \left( \text{Im}\, a_{p-j}(t,x) \xi^{p-j} - c_{p-j}\langle \nu^{p-1}_{k} \rangle^{-\sigma_{p-j}} \nu^{p-j}_{k}\right)\left(1-\psi_{k}(x)\chi_{k}(\xi)\right),
\end{eqnarray}
where $A$ is the constant appearing in condition i) of Theorem \ref{main_theorem}. \\
Let us immediately notice that by \eqref{st},\eqref{i2k} and the choice \eqref{c_0} of $c_{p-j}$ we have 
\begin{equation}\label{positive}
 I_{p-j,k}(x,\xi) \geq 0, \quad \forall\, (x,\xi)\in\R^2.
\end{equation}

Hence 
\begin{align}\label{3termini}
Re\, \langle &\text{Im}\, a_{p-j}(t,x) D^{p-j}_{x} v^{(\alpha\beta)}_{k}, v^{(\alpha\beta)}_{k} \rangle \geq Re\, \langle c_{p-j} \langle \nu^{p-1}_{k} \rangle^{-\sigma_{p-j}} \nu^{p-j}_{k} v^{(\alpha\beta)}_{k}, v^{(\alpha\beta)}_{k} \rangle
\\\nonumber
&+Re\, \langle I_{p-j,k}(x,D) v^{(\alpha\beta)}_{k}, v^{(\alpha\beta)}_{k} \rangle+Re\, \langle J_{p-j,k}(t,x,D) v^{(\alpha\beta)}_{k}, v^{(\alpha\beta)}_{k} \rangle
\\\nonumber
&= c_{p-j} \langle \nu^{p-1}_{k} \rangle^{-\sigma_{p-j}} \nu^{p-j}_{k} \|v^{(\alpha\beta)}_{k}\|^2
\\\nonumber
&+Re\, \langle I_{p-j,k}(x,D) v^{(\alpha\beta)}_{k}, v^{(\alpha\beta)}_{k} \rangle+Re\, \langle J_{p-j,k}(t,x,D) v^{(\alpha\beta)}_{k}, v^{(\alpha\beta)}_{k} \rangle
\\\nonumber
&\geq c_{p-j} 2^{-\sigma_{p-j}/2} \nu^{(p-1)(1-\sigma_{p-j})-j+1}_{k} \|v^{(\alpha\beta)}_{k}\|^2
\\\nonumber
&+ Re\, \langle I_{p-j,k}(x,D) v^{(\alpha\beta)}_{k}, v^{(\alpha\beta)}_{k} \rangle+Re\, \langle J_{p-j,k}(t,x,D) v^{(\alpha\beta)}_{k}, v^{(\alpha\beta)}_{k} \rangle .
\end{align}

From now on, we shall denote by $C$ a positive constant independent of $k,\alpha,\beta,N_k$ but possibly depending on $\phi,\rho_0,T$ and on the coefficients $a_{p-j}$. 

\begin{lemma}\label{lemma_useful_estimate}
	If \eqref{cpp} is well-posed in $\mathcal{H}^{\infty}_{\theta}(\R)$, then for any $M \in \N$ the following estimate holds:
	\begin{equation*}
		\|D^{r}_{x} v^{(\alpha\beta)}_{k}\| \leq C \nu^{r}_{k}\|v^{(\alpha\beta)}_{k}\|+ C^{\alpha+\beta+M+1}\{\alpha!\beta!\}^{\theta_h} M!^{2\theta_h - 1} \nu^{r-M}_{k}
	\end{equation*}
for some $C>0$ independent of $k$.
\end{lemma}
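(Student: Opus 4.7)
Plan for the proof of Lemma \ref{lemma_useful_estimate}.

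The plan is to compare $D^{r}_{x} v^{(\alpha\beta)}_{k}$ with a Fourier multiplier acting directly on $v^{(\alpha\beta)}_{k}$, controlling the discrepancy via the remainder of a pseudodifferential composition. First I would introduce a Gevrey cutoff in the frequency variable, $\tilde{\chi}(\xi) := h\!\bigl(\tfrac{\xi-\nu_{k}}{\nu_{k}/2}\bigr)$, with $h$ as in \eqref{eq_def_localization}. Then $\tilde{\chi}\equiv 1$ on $\{|\xi-\nu_{k}|\leq \nu_{k}/4\}\supset \supp_{\xi} w^{(\alpha\beta)}_{k}$, while $\supp\tilde{\chi}\subset \{|\xi-\nu_{k}|\leq \nu_{k}/2\}$, with Gevrey bounds $|\partial^{j}_{\xi}\tilde{\chi}|\leq C^{j+1}j!^{\theta_{h}}\nu_{k}^{-j}$. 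Two features are crucial: (a) $\tilde{\chi}(\xi)\,D^{\gamma}_{x} w^{(\alpha\beta)}_{k}(x,\xi)=D^{\gamma}_{x} w^{(\alpha\beta)}_{k}(x,\xi)$ for every $\gamma\in\N_{0}$, since $x$-derivatives do not affect the $\xi$-support; and (b) for $j\geq 1$, $\supp\partial^{j}_{\xi}\tilde{\chi}$ is disjoint from $\supp_{\xi} D^{\gamma}_{x} w^{(\alpha\beta)}_{k}$, hence $\partial^{j}_{\xi}\tilde{\chi}\cdot D^{\gamma}_{x} w^{(\alpha\beta)}_{k}\equiv 0$.

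Next, for $M>r$ I would apply the composition formula from Section \ref{section_preliminaries} to $\mathrm{op}(\xi^{r}\tilde{\chi})\circ \mathrm{op}(w^{(\alpha\beta)}_{k})$, writing the $M$-th truncation
$$\mathrm{op}(\xi^{r}\tilde{\chi})\circ \mathrm{op}(w^{(\alpha\beta)}_{k}) = \mathrm{op}\!\left(\sum_{\gamma<M}\frac{1}{\gamma!}\partial^{\gamma}_{\xi}(\xi^{r}\tilde{\chi})\,D^{\gamma}_{x} w^{(\alpha\beta)}_{k}\right)+\mathrm{op}(r_{M}).$$
Expanding $\partial^{\gamma}_{\xi}(\xi^{r}\tilde{\chi})$ by Leibniz and invoking (a)-(b), every term in which at least one $\xi$-derivative falls on $\tilde{\chi}$ vanishes identically; only the contribution $\binom{r}{\gamma}\xi^{r-\gamma}D^{\gamma}_{x} w^{(\alpha\beta)}_{k}$ (for $\gamma\leq r$) survives, while the $\gamma>r$ terms vanish. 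Since $M>r$, the truncated sum coincides with $\xi^{r}\# w^{(\alpha\beta)}_{k}$, the symbol of $D^{r}_{x}\circ \mathrm{op}(w^{(\alpha\beta)}_{k})$. This yields the identity
$$D^{r}_{x} v^{(\alpha\beta)}_{k} = \mathrm{op}(\xi^{r}\tilde{\chi})\,v^{(\alpha\beta)}_{k} - \mathrm{op}(r_{M})\,u_{k}.$$

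I would then estimate both terms. Since $\tilde{\chi}$ depends only on $\xi$, $\mathrm{op}(\xi^{r}\tilde{\chi})$ is a Fourier multiplier with symbol pointwise bounded by $C\nu_{k}^{r}$, and Plancherel's theorem gives $\|\mathrm{op}(\xi^{r}\tilde{\chi})\,v^{(\alpha\beta)}_{k}\|\leq C\nu_{k}^{r}\|v^{(\alpha\beta)}_{k}\|$. For the remainder, I would combine Theorem \ref{theorem_Calderon_Vaillancourt} with the composition remainder bound: a direct Leibniz computation gives $|\partial^{M}_{\xi}(\xi^{r}\tilde{\chi})|^{(0)}_{\ell_{1}}\leq C^{M+1}M!^{\theta_{h}}\nu_{k}^{r-M}$ (the dominant contribution being when all $M$ derivatives hit $\tilde{\chi}$), while Lemma \ref{lemma_estimates_w_k} yields $|\partial^{M}_{x} w^{(\alpha\beta)}_{k}|^{(0)}_{\ell_{1}}\leq C^{\alpha+\beta+M+1}(\alpha!\beta!M!)^{\theta_{h}}\nu_{k}^{-M(p-1)}$. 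Taking into account the factor $1/M!$ arising from the integral form of $r_{M}$ (namely $\int_{0}^{1}\tfrac{(1-\theta)^{M-1}}{(M-1)!}d\theta=\tfrac{1}{M!}$), one arrives at
$$\|\mathrm{op}(r_{M})\,u_{k}\|\leq C^{\alpha+\beta+M+1}(\alpha!\beta!)^{\theta_{h}}M!^{2\theta_{h}-1}\nu_{k}^{r-Mp}\,\|u_{k}\|.$$
The $\mathcal{H}^{\infty}_{\theta}(\R)$ well-posedness assumption yields $\|u_{k}\|\leq \|u_{k}\|_{H^{0}_{\rho;\theta}(\R)}\leq C$ uniformly in $k$ (cf.~\eqref{Ekab}); since $\nu_{k}^{r-Mp}\leq \nu_{k}^{r-M}$ for $p\geq 2$ and $\nu_{k}\geq 1$, the desired inequality follows for $M>r$. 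The remaining finitely many cases $M\leq r$ are obtained by applying the bound with the fixed choice $M'=r+1$ and absorbing the constants.

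The principal obstacle I foresee is the careful tracking of the Gevrey-combinatorial constants through the composition formula in order to isolate exactly the factor $M!^{2\theta_{h}-1}$ rather than $M!^{2\theta_{h}}$; this relies on exploiting the $1/M!$ built into the integral representation of $r_{M}$. A secondary point is verifying rigorously that $\supp\partial^{j}_{\xi}\tilde{\chi}$ and $\supp_{\xi} D^{\gamma}_{x} w^{(\alpha\beta)}_{k}$ are disjoint for all $j\geq 1$ and $\gamma\in\N_{0}$, so that the truncated asymptotic expansion matches the polynomial symbol $\xi^{r}\# w^{(\alpha\beta)}_{k}$ exactly.
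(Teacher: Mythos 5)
Your argument is correct and rests on the same machinery as the paper's proof: a frequency cutoff identically $1$ on the $\xi$-support of $w^{(\alpha\beta)}_{k}$, the Kumano-go composition formula with the $1/M!$ coming from the integral form of the remainder (which is exactly what produces $M!^{2\theta_h-1}$ instead of $M!^{2\theta_h}$), Calder\'on--Vaillancourt, and the uniform bound $\|u_k(t)\|\leq C$ supplied by the assumed well-posedness. Your identity $D^{r}_{x}v^{(\alpha\beta)}_{k}=\mathrm{op}(\xi^{r}\tilde{\chi})v^{(\alpha\beta)}_{k}-\mathrm{op}(r_{M})u_{k}$ is indeed exact for $M>r$, since every term of the truncated expansion in which a $\xi$-derivative falls on $\tilde{\chi}$ vanishes by the support properties you verify, so the truncation reproduces the finite Leibniz symbol of $D^{r}_{x}\circ w^{(\alpha\beta)}_{k}(x,D)$; this is the same cancellation the paper exploits, only packaged as an exact identity rather than through the splitting $\xi^{r}=\xi^{r}\chi_{k}+\xi^{r}\{1-\chi_{k}\}$ (for $M>r$ the two remainder symbols essentially coincide, since $\partial^{M}_{\xi}\xi^{r}=0$). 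The one genuine divergence is the case $M\leq r$: the paper estimates its remainder directly there, which forces the extra decomposition \eqref{eq_annoying_term} and an integration-by-parts argument to handle the non-localized term $\{1-\chi_{k}(\xi)\}\xi^{r-M}$, whereas you dispose of it by invoking the bound with $M'=r+1$ and using $\nu_{k}^{-1}\leq\nu_{k}^{r-M}$ (legitimate once $\nu_k\geq 1$, i.e. for $k$ large or after normalizing the sequence) together with absorption of the $r$-dependent constants into $C$, which is allowed since $C$ need only be independent of $k$, $\alpha$, $\beta$, $M$. Net effect: your route is equivalent in substance and slightly cleaner on the $M\leq r$ case.
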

\begin{proof}
	We split the symbol of $D^{r}_{x}$ as
	$$
	\xi^{r} = \xi^{r}\chi_{k}(\xi) + \xi^{r}\{1-\chi_k(\xi)\}.
	$$ 
	Using the properties of the support of $\chi_k$ ($\xi$ comparable with $\nu_k$) and Calder\'on-Vaillancourt theorem we obtain
	\begin{equation}\label{eq_estimate_11}
		\|D^{r}_{x}\chi_{k}(D) v^{(\alpha\beta)}_{k}\| \leq C \nu^{r}_{k} \|v^{(\alpha\beta)}_{k}\|.
	\end{equation}
	
	Since the supports of $w^{(\alpha\beta)}_{k}(x,\xi)$ and of $1-\chi_{k}(\xi)$ are disjoint, recalling \eqref{vk} we get 
	\begin{align}\label{split}
		{\rm{op}}\left(\xi^{r}\{1-\chi_{k}(\xi)\}\right) w^{(\alpha\beta)}_{k}(x,D) = r^{(\alpha\beta)}_{k,M}(x,D), 
	\end{align} 
	where, defining $q_{k}(\xi) = \xi^{r}\{1-\chi_{k}(\xi)\}$,
	\begin{align*}
		r^{(\alpha\beta)}_{k,M}(x,\xi) = M\int_{0}^{1} \frac{(1-\theta)^{M-1}}{M!} \, Os - \iint e^{-iy\eta} \partial^{M}_{\xi} q_{k}(\xi+\theta\eta) D^{M}_{x}w^{(\alpha\beta)}_{k}(x+y,\xi)  dy\dslash\eta \, d\theta.
	\end{align*}
	We write $\partial^{M}_{\xi}q_k$ in the following way:
	\begin{equation}\label{eq_annoying_term}
	\partial^{M}_{\xi}q_k(\xi) =\left\{
	\begin{array}{ll}
	\{1-\chi_{k}(\xi)\}\ds\frac{r!}{(r-M)!} \xi^{r-M} - \sum_{\overset{M_1 + M_2 = M}{M_1\geq 1}}\frac{M!}{M_1!M_2!} \partial^{M_1}_{\xi}\chi_k(\xi) \frac{r!}{(r-M_2)!}\xi^{r-M_2}, & M \leq r,
	\\
	- \ds\sum_{\overset{M_1 + M_2 = M}{M_2\leq r}}\ds\frac{M!}{M_1!M_2!} \partial^{M_1}_{\xi}\chi_k(\xi) \frac{r!}{(r-M_2)!}\xi^{r-M_2}, & M > r.
	\end{array}\right.
	\end{equation}
 If $M > r$ we see that $\supp{\partial^{M}_{\xi} q_k} \subset \supp{\chi_k}$, so we can estimate the seminorms of $r^{\alpha\beta}_{k,M}$ as follows: for every $\ell_0 \in \N_0$ there exists $\ell_1 := \ell_1(\ell_0)$ such that  
	\begin{align*}
		|r^{(\alpha\beta)}_{k,M}|^{(0)}_{\ell_0} \leq C(\ell_0) \frac{M}{M!} |\partial^{M}_{\xi} q_{k}|^{(0)}_{\ell_1} |\partial^{M}_{x}w^{(\alpha\beta)}_{k}|^{(0)}_{\ell_1}.
	\end{align*}
	From Lemma \ref{lemma_estimates_w_k} we get
	$$
	|\partial^{M}_{x}w^{(\alpha\beta)}_{k}|^{(0)}_{\ell_1} \leq C^{\ell_1+\alpha+\beta+M+1} \ell_1!^{2\theta_h} \{\alpha!\beta!M!\}^{\theta_h} \nu^{-M(p-1)}_{k}.
	$$
	On the other hand (using the support of $\chi_k$) we have 
	$$
	|\partial^{M}_{\xi}q_{k}(\xi)|^{(0)}_{\ell_1} \leq C^{\ell_1+M+1}\ell_{1}!^{\theta_h}M!^{\theta_h} \nu^{r-M}_{k}.
	$$
	Therefore 
	$$
	|r^{(\alpha\beta)}_{k,M}|^{(0)}_{\ell_0} \leq C^{\alpha+\beta+M+1} \{\alpha!\beta!\}^{\theta_h} M!^{2\theta_h-1} \nu^{r-M}_{k} \nu_{k}^{-M(p-1)}.
	$$
	
	When $M \leq r$, from \eqref{eq_annoying_term} we have $\partial^{M}_{\xi}q_{k}(\xi) = \{1-\chi_{k}(\xi)\}r!(r-M)!^{-1}\xi^{r-M} + b_k(\xi)$ where $\supp{b_k} \subset \supp{\chi_k}$. Since the part of $r^{(\alpha\beta)}_{k,M}$ involving $b_k$ can be estimated as before, we only explain how to treat the part regarding $\{1-\chi_{k}(\xi)\}r!(r-M)!^{-1}\xi^{r-M}$ which will be denoted by $a^{(\alpha\beta)}_{k,M}$ in the following. Writing 
	$$
	(\xi+\theta\eta)^{r-M} = \sum_{r_1 + r_2 = r-M} \frac{(r-M)!}{r_1!r_2!} \xi^{r_1} (\theta\eta)^{r_2}, 
	$$
	integration by parts gives
	\begin{align*}
		a^{(\alpha\beta)}_{k,M}(x,\xi) = &\frac{r!}{(r-M)!}\sum_{r_1 + r_2=r-M} \frac{(r-M)!}{r_1!r_2!} M\int_{0}^{1} \frac{(1-\theta)^{M-1}}{M!} \theta^{r_2} \\
		&Os - \iint e^{-iy\eta} \{1-\chi_k\}(\xi+\theta\eta)  D^{M+r_2}_{x}w^{(\alpha\beta)}_{k}(x+y,\xi)\xi^{r_1}  dy\dslash\eta \, d\theta.
	\end{align*}
	In this way, for any $\ell_0 \in \N_0$ there exists $\ell_1 := \ell_1(\ell_0)$ such that 
	\begin{align*}
		|a^{(\alpha\beta)}_{k,M}(x,\xi)|^{(0)}_{\ell_0} &\leq  \frac{r!M}{M!} \sum_{r_1+r_2=r-M} \frac{1}{r_1!r_2!} |\{1-\chi_k\}(\xi)|^{(0)}_{\ell_1} |\xi^{r_1}D^{M+r_2}_{x} w^{(\alpha\beta)}_{k}(x,\xi)|^{(0)}_{\ell_1} \\
		&\leq \frac{r!M}{M!} \sum_{r_1+r_2=r-M} \frac{1}{r_1!r_2!}  C^{\alpha+\beta+r+1}_{\ell_0} \{\alpha!\beta!M!r_2!\}^{\theta_h} \nu^{-(p-1)(M+r_2)}_{k} \nu^{r_1}_{k} \\
		&\leq C^{\alpha+\beta+M+1}_{\ell_1, r} \{\alpha!\beta!\}^{\theta_h} M!^{2\theta_h-1} \nu^{r-M}_{k} \nu^{-(p-1)M}_{k}.
	\end{align*} 
	Thus, we also have 
	\begin{equation}\label{remai}
	|r^{(\alpha\beta)}_{k,M}|^{(0)}_{\ell_0} \leq C^{\alpha+\beta+M+1} \{\alpha!\beta!\}^{\theta_h} M!^{2\theta_h-1} \nu^{r-pM}_{k},
	\end{equation}
	when $M \leq r$. Therefore, from \eqref{vk},\eqref{split}, Calder\'on-Vaillancourt Theorem, \eqref{remai} and the energy estimate for $u_k$ (coming from the assumed $\mathcal{H}^{\infty}_{\theta}(\R)$ well-posedness) we get
	\begin{align}\label{eq_estimate_22}
		\|D^{r}_{x} (1-\chi_k(D)) v^{(\alpha\beta)}_{k}\|\leq C^{\alpha+\beta+M+1} \{\alpha!\beta!\}^{\theta_h} M!^{2\theta_h-1} \nu^{r-pM}_{k}.
	\end{align}
	From \eqref{eq_estimate_11} and \eqref{eq_estimate_22} we conclude the desired estimate.
\end{proof}

\subsubsection{Estimate of $Re\, \langle I_{p-j,k}(x,D) v^{(\alpha\beta)}_{k}, v^{(\alpha\beta)}_{k} \rangle$} 
The symbol $I_{p-j,k}$ defined by \eqref{i2k} belongs to $SG^{p-j,-\sigma_{p-j}}(\R^{2})$ and its seminorms are uniformly bounded with respect to $k$. Indeed, since $x$ is comparable with $\nu^{p-1}_{k}$ and $\xi$ is comparable with $\nu_k$ on the support of $\psi_k(x)\chi_k(\xi)$ we obtain 
\begin{align*}
	|\partial^{\gamma}_{\xi}\partial^{\delta}_{x} &I_{p-j,k}(x,\xi)| \leq \sum_{\overset{\gamma_1+\gamma_2 = \gamma}{\delta_1+\delta_2 = \delta}} \frac{\gamma!\delta!}{\gamma_1!\gamma_2!\delta_1!\delta_2!} 
	|\partial^{\gamma_1}_{\xi} \partial^{\delta_1}_{x}\{ A\langle x \rangle^{-\sigma_{p-j}} \xi^{p-j} - c_{p-j}\langle \nu^{p-1}_{k} \rangle^{-\sigma_{p-j}} \nu^{p-1}_{k}\}|
	\\ &\times|\partial^{\delta_2}_{x}\psi_{k}(x) \partial^{\gamma_2}_{\xi}\chi_{k}(\xi)| \\
	&\leq \sum_{\overset{\gamma_1+\gamma_2 = \gamma}{\delta_1+\delta_2 = \delta}} \frac{\gamma!\delta!}{\gamma_1!\gamma_2!\delta_1!\delta_2!} C^{\gamma_1+\delta_1+1} \gamma_1!\delta_1! \langle \xi \rangle^{p-j - \gamma_1} \langle x \rangle^{-\sigma_{p-j}-\delta_1} C^{\gamma_2+\delta_2+1} \gamma_2!^{\theta_h} \delta_2!^{\theta_h} \nu^{-\gamma_2-\delta_2(p-1)}_{k} \\
	&\leq C^{\gamma+\delta+1} \{\gamma!\delta!\}^{\theta_h} \langle \xi \rangle^{p-j-\gamma} \langle x \rangle^{-\sigma_{p-j}-\delta}.
\end{align*} 
Thanks to \eqref{positive}, we can apply Theorem \ref{theorem_SG_sharp_garding} to $I_{p-j,k}(x,D)$ to get the following decomposition 
$$
	I_{p-j,k}(x,D) = p_{p-j,k}(x,D) + r_{p-j,k}(x,D),
$$
where $p_{p-j,k}(x,D)$ is a positive operator and $r_{p-j,k} \in SG^{p-j-1,-\sigma_{p-j}-1}(\R^{2})$. In addition, since the seminorms of $I_{p-j,k}$ are uniformly bounded with respect to $k$, the same holds for the seminorms of $r_{p-j,k}$. In this way we conclude that
\begin{align*}
	\text{Re}\, \langle I_{p-j,k}(x,D) v^{(\alpha\beta)}_{k}, v^{(\alpha\beta)}_{k} \rangle &\geq \text{Re}\, \langle r_{p-j,k}(x,D) v^{(\alpha\beta)}_{k}, v^{(\alpha\beta)}_{k} \rangle \\
	&=  \text{Re}\, \langle  \underbrace{r_{p-j,k}(x,D) \langle x \rangle^{\sigma_{p-j}+1}}_{\text{order}\, p-j-1} \langle x \rangle^{-\sigma_{p-j}-1} v^{(\alpha\beta)}_{k}, v^{(\alpha\beta)}_{k} \rangle \\
	&\geq - C \|\langle x \rangle^{-\sigma_{p-j}-1} v^{(\alpha\beta)}_{k}\|_{H^{p-j-1}(\R)}  \| v^{(\alpha\beta)}_{k}\| \\
	&\geq - C \| v^{(\alpha\beta)}_{k}\| \sum_{r=0}^{p-j-1} \|D^{r}_{x} \{\langle x \rangle^{-1-\sigma_{p-j}} v^{(\alpha\beta)}_{k}\} \|.
\end{align*}

To handle with the terms $\| D^{r}_{x} \{\langle x \rangle^{-\sigma_{p-j}-1} v^{(\alpha\beta)}_{k}\}\|$ we first write 
\begin{align*}
	D^{r}_{x} \{\langle x \rangle^{-\sigma_{p-j}-1} v^{(\alpha\beta)}_{k}\} &= \sum_{r' = 0}^r \binom{r}{r'} D^{r'}_{x}\langle x \rangle^{-\sigma_{p-j}-1} D^{r-r'}_{x} v^{(\alpha\beta)}_{k}.
\end{align*}
On the support of $D^{r-r'}_{x}v^{(\alpha\beta)}_{k}$ we have that $x$ is comparable with $\nu^{p-1}_{k}$, so
\begin{align*}
	\|D^{r}_{x}\{ \langle x \rangle^{-\sigma_{p-j}-1} v^{(\alpha\beta)}_{k} \} \|\leq C^{r+1} \nu^{-(p-1)(\sigma_{p-j}+1)}_{k} \sum_{r' = 0}^r \frac{r!}{(r-r')!} \|D^{r-r'}_{x} v^{(\alpha\beta)}_{k} \|.
\end{align*}
Applying Lemma \ref{lemma_useful_estimate} for $M = N_k$ we obtain 
\begin{align*}
\|D^{r}_{x}\{ \langle x \rangle^{-\sigma_{p-j}-1} v^{(\alpha\beta)}_{k} \} \| \leq & \, C \nu^{-(p-1)(\sigma_{p-j}+1)+r}_{k} \| v_k^{(\alpha \beta)}\|
 \\ &+ C^{\alpha+\beta+N_k+1} \{\alpha!\beta!\}^{\theta_h} N_k!^{2\theta_h-1} \nu^{-(p-1)(\sigma_{p-j}+1)}_{k} \nu^{r-N_k}_{k}.
\end{align*}
Hence, 
\begin{align}\label{eq_estimate_I_2_k}
	\textrm{Re} \, \langle I_{p-j,k}(x,D) v^{(\alpha\beta)}_{k}, v^{(\alpha\beta)}_{k} \rangle &\geq  -C \nu^{-(p-1)\sigma_{p-j} - j}_{k}\|v^{(\alpha\beta)}_{k}\|^{2}  \\ \nonumber
	&-C^{\alpha+\beta+N_k+1} \{\alpha!\beta!\}^{\theta_h} N_k!^{2\theta_h-1} \nu^{-(p-1)\sigma_{p-j} -j-N_k}_{k} \|v^{(\alpha\beta)}_{k}\|.
\end{align}

\subsubsection{Estimate of $\textrm{Re} \, \langle J_{p-j,k}(t,x,D) v^{(\alpha\beta)}_{k}, v^{(\alpha\beta)}_{k} \rangle$}
Since the supports of $1-\psi_{k}(x)\chi_{k}(\xi)$ and of $w^{(\alpha\beta)}_{k}(x,\xi)$ are disjoint, we may write
\begin{align*}
	J_{p-j,k}(t,x,D) w^{(\alpha\beta)}_{k}(x,D) = R^{(\alpha\beta)}_{p-j,k}(t,x,D), 
\end{align*} 
where 
\begin{align*}
	R^{(\alpha\beta)}_{p-j,k}(t,x,\xi) = N_k\int_{0}^{1} \frac{(1-\theta)^{N_k-1}}{N_k!} \, Os - \iint e^{-iy\eta} \partial^{N_k}_{\xi} J_{p-j,k}(t,x,\xi+\theta\eta) D^{N_k}_{x}w^{(\alpha\beta)}_{k}(x+y,\xi)  dy\dslash\eta \, d\theta.
\end{align*}
The seminorms of $R^{(\alpha\beta)}_{k}$ can be estimated in the following way: for every $\ell_0 \in \N_0$ there exists $\ell_1 = \ell_1(\ell_0)$ such that  
\begin{align*}
	|R^{(\alpha\beta)}_{p-j,k}|^{(0)}_{\ell_0} \leq C(\ell_0) \frac{N_k}{N_k!} |\partial^{N_k}_{\xi} J_{p-j,k}|^{(0)}_{\ell_1} |\partial^{N_k}_{x}w^{(\alpha\beta)}_{k}|^{(0)}_{\ell_1}.
\end{align*}
From Lemma \ref{lemma_estimates_w_k} we get
$$
|\partial^{N_k}_{x}w^{(\alpha\beta)}_{k}|^{(0)}_{\ell_1} \leq C^{\ell_1+\alpha+\beta+N_k+1} \ell_1!^{2\theta_h} \{\alpha!\beta!N_k!\}^{\theta_h} \nu^{-N_k(p-1)}_{k}.
$$
On the other hand, with similar computations as in \eqref{eq_annoying_term} and since there is no harm into assuming $N_k \geq p$ (because $\nu_k \to +\infty$ and $N_k$ is defined by \eqref{eq_definition_of_N_k}), 
\begin{align*}
\partial^{N_k}_{\xi} J_{p-j,k}(t,x,\xi) &= -\sum_{\overset{N_1+N_2=N_k}{N_1 \leq p-j}} \frac{N_k!}{N_1!N_2!} \partial^{N_1}_{\xi}\{ \text{Im}\,a_{p-j}(t,x) \xi^{p-j} - c_{p-j}\langle \nu^{p-1}_{k} \rangle^{-\sigma_{p-j}} \nu^{p-j}_{k}\} \psi_{k}(x)\partial^{N_2}_{\xi}\chi_{k}(\xi)
\end{align*}
hence 
$$
|\partial^{N_k}_{\xi}J_{p-j,k}(t,x,\xi)|^{(0)}_{\ell_1} \leq C^{\ell_1+N_k+1}\ell_{1}!^{2\theta_h}N_k!^{\theta_h} \nu^{p-j-N_k}_{k}.
$$
Therefore 
$$
|R^{(\alpha\beta)}_{p-j,k}|^{(0)}_{\ell_0} \leq C^{\alpha+\beta+N_k+1} \{\alpha!\beta!\}^{\theta_h} N_k!^{2\theta_h-1} \nu^{p-j-N_k}_{k} \nu^{-N_k(p-1)}_{k},
$$
which allows us to conclude
\begin{align*}
	\|J_{p-j,k}(t,x,D) v^{(\alpha\beta)}_{k}\|_ \leq C^{\alpha+\beta+N_k+1} \{\alpha!\beta!\}^{\theta_h} N_k!^{2\theta_h-1} \nu^{p-j-N_k}_{k} \nu^{-N_k(p-1)}_{k}.
\end{align*}
So,
\begin{align}\label{eq_estimate_I_3_k}
	\textrm{Re} \, \langle J_{p-j,k}(t,x,D) v^{(\alpha\beta)}_{k}, v^{(\alpha\beta)}_{k} \rangle \geq 
	-C^{\alpha+\beta+N_k+1} \{\alpha!\beta!\}^{\theta_h} N_k!^{2\theta_h-1} \nu^{p-j-N_k}_{k} \nu^{-N_k(p-1)}_{k} \|v^{(\alpha\beta)}_{k}\|.
\end{align}

\subsubsection{Estimate of \, $\sum_{s=0}^{p-j-1} \binom{p-j}{s} \langle i D^{p-j-s}_{x}\text{Re}\, a_{p-j} D^{s}_{x} v^{(\alpha\beta)}_{k}, v^{(\alpha\beta)}_{k} \rangle$}
From assumption (ii), the support of $D^{s}_{x} v^{(\alpha\beta)}_{k}$ and Lemma \ref{lemma_useful_estimate} we get, for $s \leq p-j-1$,
\begin{align*}
\| D^{p-j-s}_{x}\text{Re}\, a_{p-j} D^{s}_{x} v^{(\alpha\beta)}_{k} \| &\leq C \nu^{-(p-j-s)(p-1)}_{k} \|D^{s}_{x} v^{(\alpha\beta)}_{k} \| \\
&\leq C \nu^{-(p-j-s)(p-1)}_{k}\{ C \nu^{s}_{k} \|v^{(\alpha\beta)}_{k}\| + C^{\alpha+\beta+N_k+1}(\alpha!\beta!)^{\theta_h}N_k!^{2\theta_h - 1}\nu^{s-N_k}_{k} \} \\
&\leq C \nu^{-j}_{k} \|v^{(\alpha\beta)}_{k}\| +  C^{\alpha+\beta+N_k+1}(\alpha!\beta!)^{\theta_h}N_k!^{2\theta_h - 1}\nu^{-j-N_k}_{k}.
\end{align*}
Hence 
\begin{align}\label{eq_estimate_real_part}
	\frac{1}{2}\sum_{s=0}^{p-j-1} \binom{p-j}{s} \langle i D^{p-j-s}_{x}\text{Re}\, a_{p-j} D^{s}_{x} v^{(\alpha\beta)}_{k}, v^{(\alpha\beta)}_{k} \rangle 
	&\geq -C \nu^{-j}_{k} \|v^{(\alpha\beta)}_{k}\|^{2} \\ \nonumber
	&- C^{\alpha+\beta+N_k+1}(\alpha!\beta!)^{\theta_h}N_k!^{2\theta_h - 1}\nu^{-j-N_k}_{k} \|v^{(\alpha\beta)}_{k}\|.
\end{align}

From \eqref{3termini}, \eqref{eq_estimate_I_2_k}, \eqref{eq_estimate_I_3_k} and \eqref{eq_estimate_real_part} using the fact that $\nu_k \to +\infty$ we obtain the following lemma.

\begin{lemma}\label{lemma_estimate_that_gives_contradiction}
	If \eqref{cpp} is well-posed in $\mathcal{H}^{\infty}_{\theta}(\R)$, then for all $k$ sufficiently large the estimate  
	\begin{align*}
		- \sum_{j=1}^{p-1} \text{Re}\, \langle ia_{p-j}(t,x) D^{p-j}_{x} v^{(\alpha\beta)}_{k}, v^{\alpha\beta}_{k} \rangle  &\geq c_1  \nu^{\Xi}_{k} \, \|v^{(\alpha\beta)}_{k}\|^{2} \\
		&-C^{\alpha+\beta+N_k+1} \{\alpha!\beta!\}^{\theta_h} N_k!^{2\theta_h-1} \nu^{-1-N_k}_{k} \|v^{(\alpha\beta)}_{k}\|,	
	\end{align*}
	holds for some $c_1 > 0$ independent of $k, \alpha, \beta$ and $N_k$, where $\Xi$ is defined by \eqref{equation_thesis_main_theorem}.
\end{lemma}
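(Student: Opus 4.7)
The plan is essentially to assemble the estimates already established in Subsection 4.1. For each fixed $j\in\{1,\dots,p-1\}$, I would first decompose
\begin{align*}
-\text{Re}\,\langle ia_{p-j}D^{p-j}_{x}v^{(\alpha\beta)}_{k},v^{(\alpha\beta)}_{k}\rangle
&=\text{Re}\,\langle \text{Im}\,a_{p-j}D^{p-j}_{x}v^{(\alpha\beta)}_{k},v^{(\alpha\beta)}_{k}\rangle\\
&\quad+\tfrac12\sum_{s=0}^{p-j-1}\binom{p-j}{s}\langle iD^{p-j-s}_{x}\text{Re}\,a_{p-j}\,D^{s}_{x}v^{(\alpha\beta)}_{k},v^{(\alpha\beta)}_{k}\rangle,
\end{align*}
and then plug in \eqref{3termini} for the imaginary-part piece. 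This produces the good term $c_{p-j}2^{-\sigma_{p-j}/2}\nu^{(p-1)(1-\sigma_{p-j})-j+1}_{k}\|v^{(\alpha\beta)}_{k}\|^{2}$ together with the two remainders containing $I_{p-j,k}$ and $J_{p-j,k}$, which are bounded from below by \eqref{eq_estimate_I_2_k} and \eqref{eq_estimate_I_3_k}. The real-part commutator piece is controlled by \eqref{eq_estimate_real_part}.

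Next I would isolate, inside each $j$, the two types of errors. The terms quadratic in $\|v^{(\alpha\beta)}_{k}\|$ are $-C\nu^{-(p-1)\sigma_{p-j}-j}_{k}\|v^{(\alpha\beta)}_{k}\|^{2}$ from \eqref{eq_estimate_I_2_k} and $-C\nu^{-j}_{k}\|v^{(\alpha\beta)}_{k}\|^{2}$ from \eqref{eq_estimate_real_part}; since $\sigma_{p-j}\geq 0$ and $j\geq 1$ both exponents are $\leq -1$, so these are $O(\nu^{-1}_{k})$, whereas by Remark 2 the good exponent $(p-1)(1-\sigma_{p-j})-j+1$ reaches $\Xi>0$ at the maximizing index $j_{*}$. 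Summing in $j$ and using $\nu_k\to\infty$, all quadratic errors are absorbed into a fraction of the leading positive contribution from $j_{*}$. The contributions from the non-maximizing indices are either nonnegative or, if negative, of strictly smaller order and absorbed in the same way, leaving a strictly positive residual constant $c_1>0$ in front of $\nu^{\Xi}_{k}\|v^{(\alpha\beta)}_{k}\|^{2}$.

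Finally I would gather the remainders that are linear in $\|v^{(\alpha\beta)}_{k}\|$. They arise from \eqref{eq_estimate_I_2_k}, \eqref{eq_estimate_I_3_k} and \eqref{eq_estimate_real_part}, all carry the factor $\{\alpha!\beta!\}^{\theta_h}N_{k}!^{2\theta_h-1}$, and involve negative powers of $\nu_k$ at least as small as $\nu^{-j-N_k}_{k}$ or $\nu^{p-j-N_k-(p-1)N_k}_{k}$; in all cases the exponent is $\leq -1-N_k$ for $k$ (and hence $N_k$) large enough. Absorbing the finite sum over $j$ into the constant $C$ and using $\nu_k\geq 1$, these coalesce into the single bound $C^{\alpha+\beta+N_k+1}\{\alpha!\beta!\}^{\theta_h}N_{k}!^{2\theta_h-1}\nu^{-1-N_k}_{k}\|v^{(\alpha\beta)}_{k}\|$.

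The only delicate point is the bookkeeping in the second step: one needs to verify that the negative quadratic remainders, which depend on $j$ through $\sigma_{p-j}$, are uniformly smaller than the single positive leading term captured by $j_{*}$; since $\Xi\geq (p-1)(1-\sigma_{p-1})>0$ this is immediate for $k$ sufficiently large, but it is the step that fixes how large $k$ must be chosen, and hence how $c_1$ is extracted. Modulo this, the lemma follows by direct summation of the estimates \eqref{3termini}, \eqref{eq_estimate_I_2_k}, \eqref{eq_estimate_I_3_k} and \eqref{eq_estimate_real_part} over $j=1,\dots,p-1$.
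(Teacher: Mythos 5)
Your proposal is correct and follows essentially the same route as the paper, which obtains the lemma precisely by summing \eqref{3termini}, \eqref{eq_estimate_I_2_k}, \eqref{eq_estimate_I_3_k} and \eqref{eq_estimate_real_part} over $j=1,\dots,p-1$ and using $\nu_k\to+\infty$ to absorb the lower-order quadratic and linear remainders. Your bookkeeping of the exponents (all quadratic errors of order at most $\nu_k^{-1}$ versus the leading $\nu_k^{\Xi}$ with $\Xi>0$, and all linear remainders at most $\nu_k^{-1-N_k}$) is exactly the observation the paper leaves implicit.
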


\subsection{Estimate from below of $f^{(\alpha\beta)}_{k}$}\label{subs_f} We start recalling that
$$
f^{(\alpha\beta)}_{k} = [P, w^{(\alpha\beta)}_{k}]u_k = [D_t + a_p(t)D^p_x, w^{(\alpha\beta)}_{k}]u_k  +\sum_{j=1}^{p} \, [a_{p-j}(t,x) D^{p-j}_{x}, w^{(\alpha\beta)}_{k}]u_k.
$$
In the sequel we shall explain how to estimate the above commutators. 

\subsubsection{Estimate of $[D_t + D^p_x, w^{(\alpha\beta)}_{k}]u_k$}\label{subsection_first_braket_f_k_alpha_beta}
Since $w_k^{(\alpha \beta)}$ is independent of $t$ we have
\begin{align*}
	[D_t + a_p(t)D^p_x, w^{(\alpha\beta)}_{k}] &=  a_{p}(t)\sum_{\gamma=1}^{p}\frac{1}{\gamma!} \frac{p!}{(p-\gamma)!} D^{\gamma}_x w^{(\alpha\beta)}_{k} D^{p-\gamma}_{x}.\\
\end{align*}
Now we use the formula 
\begin{equation} \label{inverseLeibniz}
f(x) D^{\lambda}g(x)= \sum_{j=0}^{\lambda} (-1)^{j} \binom{\lambda}{j} D^{\lambda-j} (g(x)D^{j}f(x)) \end{equation}
 for smooth functions $f,g$, and the following elementary identities:
 $$ \sum_{\gamma = 1}^{p} \sum_{j=0}^{p-\gamma} c_{\gamma,j} a_{\gamma+j} = \sum_{\ell=1}^{p} \Bigg\{ \sum_{s=1}^{\ell} c_{s,\ell-s} \Bigg\} a_{\ell},
\qquad \frac{(-1)^{\ell+1}}{\ell!} = \sum_{s=1}^{\ell} \frac{(-1)^{\ell-s}}{s!(\ell-s)!},
$$
to deduce that
\begin{align*}
	[D_t + a_p(t)D^p_x, w^{(\alpha\beta)}_{k}] &=  a_p(t)\sum_{\gamma=1}^{p} \sum_{j=0}^{p-\gamma} \frac{(-1)^{j}p!}{\gamma!j!(p-\gamma-j)!} \left( \frac{1}{i\nu^{p-1}_{k}} \right)^{\gamma+j} D^{p-\gamma-j}_{x} \circ w^{((\alpha+\gamma+j)\beta)}_{k}(x,D) \\
	&= a_{p}(t)\sum_{\ell=1}^{p} \Bigg\{\sum_{s = 1}^{\ell} \frac{(-1)^{\ell-s}}{s!(\ell-s)!}\Bigg\} \frac{p!}{(p-\ell)!} \left(\frac{1}{i\nu_{k}^{p-1}}\right)^{\ell} D^{p-\ell}_{x} \circ w^{((\alpha+\ell)\beta)}_{k}(x,D) \\
	&= a_{p}(t)\sum_{\ell = 1}^{p} (-1)^{\ell+1} \binom{p}{\ell} \left(\frac{1}{i\nu_k^{p-1}}\right)^{\ell} D^{p-\ell}_{x} \circ w^{((\alpha+\ell)\beta)}_{k}(x,D).
\end{align*}
Thus, from Lemma \ref{lemma_useful_estimate} and using the fact that $a_p(t)$ is continuous on $[0,T]$ we get
\begin{align}\label{estimate_first_bracket_f_k_alpha_beta}
	\|[D_t+a_p(t)D^p_x, w^{(\alpha\beta)}_{k}]u_k\| &\leq C \sum_{\ell = 1}^{p} \binom{p}{\ell} \left(\frac{1}{\nu_{k}^{p-1}}\right)^{\ell} 
	\|D^{p-\ell}_{x} v^{((\alpha+\ell)\beta)}_{k}\| \nonumber \\
	&\leq C \sum_{\ell = 1}^{p} \frac{1}{\nu^{p(\ell-1)}_{k}} \|v^{((\alpha+\ell)\beta)}_{k}\|
	+ C^{\alpha+\beta+N_k+1} (\alpha!\beta!)^{\theta_h} N_k!^{2\theta_h-1} \nu^{p-1-N_k}_{k}.
\end{align}

\subsubsection{Estimate of $[ia_{p-j}(t,x)D^{p-j}_{x}, w^{(\alpha\beta)}_{k}]u_k$}
We first observe that
\begin{align}\label{partenza}
	[ia_{p-j}(t,x)D^{p-j}_{x}, w^{(\alpha\beta)}_{k}] &= ia_{p-j}(t,x) \sum_{\gamma = 1}^{p-j} \binom{p-j}{\gamma} D^{\gamma}_{x} w^{(\alpha\beta)}_{k}(x,D) D^{p-j-\gamma}_{x} \\\nonumber
	&-i \sum_{\gamma=1}^{N_k-1} \frac{1}{\gamma!}  D^{\gamma}_{x}a_{p-j}(t,x) \partial^{\gamma}_{\xi}w^{(\alpha\beta)}_{k}(x,D) D^{p-j}_{x}  + r^{(\alpha\beta)}_{p-j,k}(t,x,D),
\end{align}
where 
$$
r^{(\alpha\beta)}_{p-j,k}(t,x,\xi) = -iN_k\int_{0}^{1} \frac{(1-\theta)^{N_k-1}}{N_k!} \, Os - \iint e^{-iy\eta} \partial^{N_k}_{\xi}w^{(\alpha\beta)}_{k}(x,\xi+\theta\eta) D^{N_k}_{x} a_{p-j}(t,x+y) \xi^{p-j} dy\dslash\eta \, d\theta.
$$
To estimate $r^{(\alpha\beta)}_{p-j,k}$ we need to use the support properties of $w^{(\alpha\beta)}_{k}$, then we write 
$$
\xi^{p-j} = (\xi + \theta\eta - \theta\eta)^{p-j} = \sum_{\ell = 0}^{p-j} \binom{p-j}{\ell} (\xi+\theta\eta)^{\ell} (-\theta\eta)^{p-j-\ell}.
$$
Now, integration by parts gives
\begin{align*}
	&Os-\iint e^{-iy\eta} \partial^{N_k}_{\xi}w^{(\alpha\beta)}_{k}(x,\xi+\theta\eta) D^{N_k}_{x} a_{p-j}(t,x+y) \xi^{p-j} dy\dslash\eta \\
	&= \sum_{\ell = 0}^{p-j} \binom{p-j}{\ell} \theta^{p-j-\ell}\, Os-\iint D^{p-j-\ell}_{y}e^{-iy\eta} \partial^{N_k}_{\xi}w^{(\alpha\beta)}_{k}(x,\xi+\theta\eta) (\xi + \theta\eta)^{\ell} D^{N_k}_{x} a_{p-j}(t,x+y) dy\dslash\eta \\
	&= \sum_{\ell = 0}^{p-j} \binom{p-j}{\ell} (-\theta)^{p-j-\ell}\, Os-\iint e^{-iy\eta} \partial^{N_k}_{\xi}w^{(\alpha\beta)}_{k}(x,\xi+\theta\eta) (\xi + \theta\eta)^{\ell} D^{N_k+p-j-\ell}_{x} a_{p-j}(t,x+y) dy\dslash\eta.
\end{align*}
Hence, using assumption (ii), we estimate the seminorms of $r^{(\alpha\beta)}_{k}$ in the following way
\begin{align*}
|r^{(\alpha\beta)}_{p-j,k}|^{0}_{\ell_0} &\leq \frac{C^{N_k+p-j}}{N_k!} \sum_{\ell = 0}^{p-j} |\xi^{\ell}\partial^{N_k}_{\xi}w^{(\alpha\beta)}_{k}|^{(0)}_{\ell_1} |D^{N_k+p-j-\ell}_{x}a_{p-j}(t,x)|^{(0)}_{\ell_1} \\
&\leq C^{\alpha+\beta+N_k+1} \{\alpha!\beta!\}^{\theta_h} N_k!^{2\theta_h-1} \nu^{p-j-N_k}_{k},
\end{align*}
which allows to conclude (using the assumed $\mathcal{H}^{\infty}_{\theta}$ well-posedness) that
 \begin{equation}\label{eq_estimate_main_remainder_f_k}
 \|r^{(\alpha\beta)}_{p-j,k}(\cdot, D)u_{k}\| \leq C^{\alpha+\beta+N_k+1}\{\alpha!\beta!\}^{\theta_h} N_k!^{2\theta_h-1}\nu^{p-j-N_k}_{k}.
 \end{equation}

Now we consider the second term in \eqref{partenza}, and by \eqref{inverseLeibniz} we write 
\begin{align*}
	\sum_{\gamma=1}^{N_k-1} &\frac{1}{\gamma!}  D^{\gamma}_{x}a_{p-j}(t,x) \partial^{\gamma}_{\xi}w^{(\alpha\beta)}_{k}(x,D) D^{p-j}_{x} \\
	&= \sum_{\gamma = 1}^{N_k-1} \sum_{s=0}^{p-j} \frac{1}{\gamma!} \binom{p-j}{s} \left(\frac{4}{\nu_k}\right)^{\gamma} \left(\frac{i}{\nu^{p-1}_{k}}\right)^{s} 
	D^{\gamma}_{x} a_{p-j}(t,x) D^{p-j-s}_{x} \circ w^{((\alpha+s)(\beta+\gamma))}_k(x,D).
\end{align*}
Using the support of $D^{p-j-s}_{x} v^{((\alpha+s)(\beta+\gamma))}_{k}$ and the assumption (ii) we get
\begin{align*}
	\| D^{\gamma}_{x} a_{p-j}(t,x) D^{p-j-s}_{x} v^{((\alpha+s)(\beta+\gamma))}_{k} \| \leq 
	C^{\gamma+1} \gamma! \nu^{-\gamma(p-1)}_{k} \|D^{p-j-s}_{x}v^{((\alpha+s)(\beta+\gamma))}_{k}\|.
\end{align*}
Now, applying Lemma \ref{lemma_useful_estimate} with $M = N_k - \gamma$  we obtain 
\begin{align*}
	\| D^{\gamma}_{x} a_{p-j}(t,x) D^{p-j-s}_{x} &v^{((\alpha+s)(\beta+\gamma))}_{k} \| \leq C^{\gamma+1} \gamma! \nu^{-\gamma(p-1)}_{k} \nu^{p-j-s}_{k} 
	\|v^{((\alpha+s)(\beta+\gamma))}_{k}\| \\
	&+ C^{\gamma+1} \gamma! \nu^{-\gamma(p-1)}_{k} C^{\alpha+s+\beta+N_k} \{\alpha!\beta!s!\gamma!\}^{\theta_h} (N_k-\gamma)!^{2\theta_h-1} \nu^{p-j-s-(N_k-\gamma)}_{k}.
\end{align*}
Hence
\begin{align}\label{eq_estimate_shift_gamma_f_k}
	\Bigg\|\sum_{\gamma=1}^{N_k-1} &\frac{1}{\gamma!}  D^{\gamma}_{x} a_{p-j}(t,x) \partial^{\gamma}_{\xi}w^{(\alpha\beta)}_{k}(x,D) D^{p-j}_{x} u_k\Bigg\| \leq \\ &\leq C\sum_{s=0}^{p-j}\sum_{\gamma=1}^{N_k-1} C^{\gamma} \nu^{-(p-j)(s+\gamma-1)-j(s+\gamma)}_{k} \|v^{((\alpha+s)(\beta+\gamma))}_{k}\|\nonumber + C^{\alpha+\beta+N_k+1} \{\alpha!\beta!\}^{\theta_h} N_k!^{2\theta_h-1}\nu_k^{1-j-N_k}.
\end{align}

Finally, for the first term in \eqref{partenza}, we follow the same steps of Subsection \ref{subsection_first_braket_f_k_alpha_beta} to get 
\begin{align*}
	\sum_{\gamma = 1}^{p-j} \binom{p-j}{\gamma} D^{\gamma}_{x} w^{(\alpha\beta)}_{k}(x,D) D^{p-j-\gamma}_{x} =  \sum_{\ell = 1}^{p-j} (-1)^{\ell+1} \binom{p-j}{\ell} \frac{1}{(i\nu^{p-1}_{k})^{\ell}}D^{p-j-\ell}_{x} \circ w^{((\alpha+\ell)\beta)}_{k}(x,D),
\end{align*}
and therefore 
\begin{align}\label{eq_last_estimate_f_k_alpha_beta}
	\Bigg\|ia_{p-j}(t,x) \sum_{\gamma = 1}^{p-j} &\binom{p-j}{\gamma} D^{\gamma}_{x} w^{(\alpha\beta)}_{k}(x,D) D^{p-j-\gamma}_{x} u_k \Bigg\| \leq \\ \nonumber
	 &\leq C \sum_{\ell=1}^{p-j} \frac{1}{\nu^{j+p(\ell-1)}_{k}} \|v^{((\alpha+\ell)\beta)}_{k}\|
	 +C^{\alpha+\beta+N_k+1}\{\alpha!\beta!\}^{\theta_h} N_k!^{2\theta_h-1} \nu^{1-j-N_k}_{k}.
\end{align}

Gathering \eqref{eq_estimate_main_remainder_f_k}, \eqref{eq_estimate_shift_gamma_f_k} and \eqref{eq_last_estimate_f_k_alpha_beta} we obtain 
\begin{align}\label{electric_eye}
	\sum_{j=1}^{p} \|[ia_{p-j}(t,x)&D^{p-j}_{x},w^{(\alpha\beta)}_{j}]u_k\| \leq C \sum_{j=1}^{p} \sum_{s = 0}^{p-j} \sum_{\gamma = 1}^{N_k - 1} C^{\gamma} \nu^{-(p-j)(s+\gamma-1)-j(s+\gamma)}_{k}\|v^{((\alpha+s)(\beta+\gamma))}_{k}\| \\\nonumber
	&+ C\sum_{j=1}^{p}\sum_{\ell=1}^{p-j} \frac{1}{\nu^{j+p(\ell-1)}_{k}} \|v^{((\alpha+\ell)\beta)}_{k}\| + C^{\alpha+\beta+N_k+1} \{\alpha!\beta!\}^{\theta_h} N_k!^{2\theta_h-1} \nu^{p-1-N_k}_{k} \\\nonumber
	&\leq C \sum_{s = 0}^{p-1} \sum_{\gamma = 1}^{N_k - 1} C^{\gamma} \nu^{-(p-1)(s+\gamma-1)-(s+\gamma)}_{k} \|v^{((\alpha+s)(\beta+\gamma))}_{k} \|  \\\nonumber
	&+ C\sum_{\ell=1}^{p-1} \frac{1}{\nu^{1+p(\ell-1)}_{k}} \|v^{((\alpha+\ell)\beta)}_{k}\| + C^{\alpha+\beta+N_k+1} \{\alpha!\beta!\}^{\theta_h} N_k!^{2\theta_h-1} \nu^{p-1-N_k}_{k}.
\end{align}
Now, combining \eqref{estimate_first_bracket_f_k_alpha_beta} and \eqref{electric_eye} we can summarize the computations of Subsection \ref{subs_f} in the following lemma.

\begin{lemma}\label{lemma_estimate_f_k_alpha_beta}
	If \eqref{cpp} is well-posed in $\mathcal{H}^{\infty}_{\theta}(\R)$, then 
		\begin{align*}
			\|f^{(\alpha\beta)}_{k}\|&\leq C \sum_{\ell = 1}^{p} \frac{1}{\nu^{p(\ell-1)}_{k}} \|v^{((\alpha+\ell)\beta)}_{k}\|+ C\sum_{s=0}^{p-1}\sum_{\gamma=1}^{N_k-1} C^{\gamma} \nu^{-(p-1)(s+\gamma-1)-s-\gamma}_{k} \|v^{((\alpha+s)(\beta+\gamma))}_{k}\| \\ \nonumber
			&+ C^{\alpha+\beta+N_k+1} (\alpha!\beta!)^{\theta_h} N_k!^{2\theta_h-1} \nu^{p-1-N_k}_{k}
		\end{align*}	 
	for some constant $C > 0$ independent from $k, \alpha, \beta$ and $N_k$.
\end{lemma}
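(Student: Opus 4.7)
The plan is to decompose
$$
f^{(\alpha\beta)}_{k} \;=\; [D_t + a_p(t)D^p_x,\, w^{(\alpha\beta)}_{k}]u_k \;+\; \sum_{j=1}^{p}[ia_{p-j}(t,x)D^{p-j}_{x},\, w^{(\alpha\beta)}_{k}]u_k
$$
and to produce three kinds of contributions: (a) a finite sum of shifted energies $\|v^{((\alpha+\ell)\beta)}_k\|$ with powers $\nu^{-p(\ell-1)}_k$ coming from the principal commutator; (b) a double sum in $(s,\gamma)$ of shifted energies $\|v^{((\alpha+s)(\beta+\gamma))}_k\|$ coming from the subprincipal commutators; (c) a single remainder term of order $\nu^{p-1-N_k}_k$ multiplied by Gevrey factors, produced by every truncation of a symbolic asymptotic expansion to order $N_k$.

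For the principal piece I would use that $w^{(\alpha\beta)}_k$ is $t$-independent, so only $[a_p(t)D^p_x, w^{(\alpha\beta)}_k]$ contributes, and Leibniz together with the identity \eqref{inverseLeibniz} (as carried out in Subsection \ref{subsection_first_braket_f_k_alpha_beta}) rewrites this commutator as a finite linear combination $\sum_{\ell=1}^p c_\ell (i\nu^{p-1}_k)^{-\ell} D^{p-\ell}_x \circ w^{((\alpha+\ell)\beta)}_k(x,D)$. Applying Lemma \ref{lemma_useful_estimate} with $M = N_k$ to each $\|D^{p-\ell}_x v^{((\alpha+\ell)\beta)}_k\|$ splits the result into the first sum of the thesis (after simplifying $(\nu_k^{p-1})^{-\ell}\nu_k^{p-\ell} = \nu_k^{-p(\ell-1)}$) plus a tail bounded by the third term of the thesis, yielding \eqref{estimate_first_bracket_f_k_alpha_beta}.

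For each subprincipal commutator $[ia_{p-j}(t,x)D^{p-j}_x, w^{(\alpha\beta)}_k]$ the plan is to apply the standard symbol expansion of the composition $w^{(\alpha\beta)}_k(x,D)\circ a_{p-j}(t,x)D^{p-j}_x$ up to order $N_k$, producing the decomposition \eqref{partenza}. The first sum on the right of \eqref{partenza} is handled as in (a), producing contributions of size $\nu^{-j-p(\ell-1)}_k\|v^{((\alpha+\ell)\beta)}_k\|$ which are absorbed in the first sum of the thesis. The middle double sum is rewritten using $\partial^\gamma_\xi w^{(\alpha\beta)}_k = (4/\nu_k)^\gamma w^{(\alpha(\beta+\gamma))}_k$ and inverse Leibniz again to land $D^{p-j}_x$ past $w$; applying Lemma \ref{lemma_useful_estimate} with $M = N_k - \gamma$ and assumption (ii) (which on the support of $v^{((\alpha+s)(\beta+\gamma))}_k$ gives $|D^\gamma_x a_{p-j}| \le C^{\gamma+1}\gamma!\,\nu^{-\gamma(p-1)}_k$) gives the double sum in $(s,\gamma)$ of the thesis.

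The most delicate piece is the remainder $r^{(\alpha\beta)}_{p-j,k}$ coming from truncation at order $N_k$. The difficulty is that the expansion sits the $\xi^{p-j}$ factor outside the $\eta$-integration, so a naive symbol estimate would cost too many powers of $\nu_k$. The trick is to expand $\xi^{p-j} = \sum_\ell \binom{p-j}{\ell}(\xi+\theta\eta)^\ell(-\theta\eta)^{p-j-\ell}$ and integrate by parts in $y$ so that each $\eta$-factor is traded for an extra $x$-derivative of $a_{p-j}$, which is under control by assumption (ii). One then applies Lemma \ref{lemma_estimates_w_k} to bound $\partial^{N_k}_\xi w^{(\alpha\beta)}_k$ by $C^{\alpha+\beta+N_k+1}(\alpha!\beta!)^{\theta_h}N_k!^{\theta_h}\nu^{-N_k}_k$, uses Calder\'on--Vaillancourt together with the $\mathcal{H}^\infty_\theta$ well-posedness to bound $\|u_k\|$ uniformly, and arrives at \eqref{eq_estimate_main_remainder_f_k}. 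Summing the three types of contributions over $j = 1, \ldots, p$ and combining with \eqref{estimate_first_bracket_f_k_alpha_beta}, absorbing the various sub-powers $\nu^{p-j-N_k}_k$ and $\nu^{1-j-N_k}_k$ into the worst one $\nu^{p-1-N_k}_k$ (all $\ge 0$ since $N_k\to\infty$), yields the stated estimate. The main obstacle throughout is precisely the bookkeeping of combinatorial factors so that the Gevrey weights $(\alpha!\beta!)^{\theta_h} N_k!^{2\theta_h-1}$ emerge exactly, which requires being careful that each factor of $\gamma!$ from Leibniz is absorbed by the $\gamma!^{-1}$ coming from the expansion and that the $\ell_1!^{2\theta_h}$ seminorm losses in Calder\'on--Vaillancourt are independent of $\alpha,\beta,N_k$.
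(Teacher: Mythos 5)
Your proposal follows the paper's proof essentially step for step: the same splitting into principal and subprincipal commutators, the same use of the inverse Leibniz identity \eqref{inverseLeibniz} to shift derivatives onto $w_k^{((\alpha+\ell)\beta)}$, the decomposition \eqref{partenza} with Lemma \ref{lemma_useful_estimate} applied with $M=N_k$ and $M=N_k-\gamma$, and the same integration-by-parts trick on $\xi^{p-j}=(\xi+\theta\eta-\theta\eta)^{p-j}$ to control the truncation remainder. The argument is correct and coincides with the one in Subsection \ref{subs_f}.
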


\subsection{A lower bound estimate for $\partial_t E_{k}(t)$.} Lemmas \ref{lemma_estimate_that_gives_contradiction}, \ref{lemma_estimate_f_k_alpha_beta} and \eqref{e} give the following
\begin{align*}
	\partial_t \|v^{(\alpha\beta)}_{k}\|&\geq c_1 \nu^{\Xi}_{k} \|v^{(\alpha\beta)}_{k}\| - C_{a_0} \|v^{(\alpha\beta)}_{k}\|
	- C \sum_{\ell = 1}^{p} \frac{1}{\nu^{p(\ell-1)}_{k}} \|v^{((\alpha+\ell)\beta)}_{k}\| \\
	&- C\sum_{s=0}^{p-1}\sum_{\gamma=1}^{N_k-1} C^{\gamma} \nu^{-(p-1)(s+\gamma-1)-s-\gamma}_{k} \|v^{((\alpha+s)(\beta+\gamma))}_{k}\| - C^{\alpha+\beta+N_k+1} (\alpha!\beta!)^{\theta_h} N_k!^{2\theta_h-1} \nu^{p-1-N_k}_{k}.
\end{align*}
Therefore
\begin{align}\label{ripartenza}
	\partial_t E_{k}(t) &= \sum_{\alpha \leq N_k, \beta \leq N_k} \frac{1}{(\alpha!\beta!)^{\theta_1}} \partial_t \| v^{(\alpha\beta)}_{k} (t,\cdot) \| \\\nonumber
	&\geq \sum_{\alpha \leq N_k, \beta \leq N_k} \frac{1}{(\alpha!\beta!)^{\theta_1}} \{c_1 \nu^{\Xi}_{k} - C_{a_0} \}\|v^{(\alpha\beta)}_{k}\|-C \sum_{\ell = 1}^{p} \frac{1}{\nu^{p(\ell-1)}_{k}} \sum_{\alpha \leq N_k, \beta \leq N_k} \frac{1}{(\alpha!\beta!)^{\theta_1}}   \|v^{((\alpha+\ell)\beta)}_{k}\| \\\nonumber
	&-C \sum_{s=0}^{p-1}\sum_{\gamma=1}^{N_k-1} C^{\gamma} \nu^{-(p-1)(s+\gamma-1)-s-\gamma}_{k} \sum_{\alpha \leq N_k, \beta \leq N_k} \frac{1}{(\alpha!\beta!)^{\theta_1}} \|v^{((\alpha+s)(\beta+\gamma))}_{k}\|\\\nonumber
	&-\sum_{\alpha \leq N_k, \beta \leq N_k} \frac{1}{(\alpha!\beta!)^{\theta_1}} C^{\alpha+\beta+N_k+1} (\alpha!\beta!)^{\theta_h} N_k!^{2\theta_h-1} \nu^{p-1-N_k}_{k}.
\end{align}

In the sequel we shall discuss how to treat all the terms appearing in the above summation. For the first one, by \eqref{eq_def_energy} and \eqref{vk} we simply have
\begin{align}\label{eq_estimate_first_term}
	\sum_{\alpha \leq N_k, \beta \leq N_k} \frac{1}{(\alpha!\beta!)^{\theta_1}} \{c_1 \nu^{\Xi}_{k} - C_{a_0} \}\|v^{(\alpha\beta)}_{k}\|= \{c_1 \nu^{\Xi}_{k} - C_{a_0} \}E_k(t).
\end{align}
For the second one we proceed as follows
\begin{align*}
	\sum_{\ell = 1}^{p} \frac{C}{\nu^{p(\ell-1)}_{k}} \sum_{\alpha \leq N_k, \beta \leq N_k} \frac{1}{(\alpha!\beta!)^{\theta_1}}   \|v^{((\alpha+\ell)\beta)}_{k}\| &= 
	\sum_{\ell = 1}^{p} \frac{C}{\nu^{p(\ell-1)}_{k}} \sum_{\alpha \leq N_k, \beta \leq N_k} \frac{(\alpha+\ell)!^{\theta_1}}{\alpha!^{\theta_1}} E_{k,\alpha+\ell,\beta} \\
	&\leq \sum_{\ell = 1}^{p}\frac{CN_k^{\ell\theta_1}}{\nu^{p(\ell-1)}_{k}} \Bigg\{ E_{k} + \sum_{\alpha = N_k-\ell+1}^{N_k} \sum_{\beta \leq N_k} E_{k,\alpha+\ell,\beta} \Bigg\}.
\end{align*}
Recalling \eqref{Ekab} we get the upper bound
$$
E_{k,\alpha+\ell,\beta} \leq C^{\alpha+\beta+\ell+1} \{(\alpha+\ell)!\beta!\}^{\theta_h-\theta_1},
$$
so, since $\alpha+\ell \geq N_k$, we obtain
\begin{align*}
	\sum_{\ell = 1}^{p} \frac{C}{\nu^{p(\ell-1)}_{k}} \sum_{\alpha \leq N_k, \beta \leq N_k} \frac{1}{(\alpha!\beta!)^{\theta_1}}   \|v^{((\alpha+\ell)\beta)}_{k}\| &\leq 
	\sum_{\ell = 1}^{p}\frac{CN_k^{\ell\theta_1}}{\nu^{p(\ell-1)}_{k}} \Bigg\{ E_{k} + C^{N_k+1} N_k!^{\theta_h-\theta_1} \Bigg\}.
\end{align*}
Now we use the definition of $N_k := \lfloor \nu_{k}^{\frac{\lambda}{\theta_1}} \rfloor$ and the inequality $N_k^{N_k} \leq e^{N_k} N_k!$ to conclude that
\begin{align}\label{eq_estimate_second_term}
	\sum_{\ell = 1}^{p} \frac{C}{\nu^{p(\ell-1)}_{k}} \sum_{\alpha \leq N_k, \beta \leq N_k} \frac{1}{(\alpha!\beta!)^{\theta_1}}   \|v^{((\alpha+\ell)\beta)}_{k}\| &\leq 
	C \sum_{\ell = 1}^{p} \frac{\nu^{\ell\lambda}_{k}}{\nu^{p(\ell-1)}_{k}} \Bigg\{ E_{k} + C^{N_k+1} e^{N_k(\theta_h-\theta_1)} \nu^{\frac{\lambda(\theta_h-\theta_1)N_k}{\theta_1}}_{k} \Bigg\}
\\ &\leq \nonumber
	C \sum_{\ell = 1}^{p} \frac{\nu^{\ell\lambda}_{k}}{\nu^{p(\ell-1)}_{k}} E_{k} + C^{N_k+1} \nu^{C-cN_k}_{k},
\end{align}
where, from now on, $c > 0$ shall denote a constant independent from $k$ and $N$.

For the third term in \eqref{ripartenza} we recall the following inequality
$$
\frac{(\beta+\gamma)!}{\beta!} \leq (\beta+\gamma)^{\gamma} \leq (rN_k)^{\gamma} \leq r^\gamma (\nu^{\frac{\lambda}{\theta_1}}_{k})^{\gamma}, \quad \text{provided that}\, \beta+\gamma \leq rN_k, r \in\N,
$$
thus, since $\lambda \in (0,1)$, for $k$ sufficiently large so that $C\nu^{\lambda-1}_{k} < 1$ we have
\begin{align*}
 \sum_{s=0}^{p-1} &\sum_{\gamma=1}^{N_k-1} C^{\gamma} \nu^{-(p-1)(s+\gamma-1)-s-\gamma}_{k}  \sum_{{\alpha \leq N_k, \beta \leq N_k}} \frac{1}{(\alpha!\beta!)^{\theta_1}} \|v^{((\alpha+s)(\beta+\gamma))}_{k}\| \\
 &=  \sum_{s=0}^{p-1} \sum_{\gamma=1}^{N_k-1}  \Bigg\{ \sum_{\overset{\alpha \leq N_k-s}{\beta \leq N_k-\gamma}} + \sum_{\overset{\alpha \leq N_k, \beta \leq N_k}{\alpha+s > N_k\,\text{or}\,\beta+\gamma > N_k} } \Bigg\}  C^{\gamma}\nu^{-(p-1)(s+\gamma-1)-s-\gamma}_{k} \left(\frac{(\alpha+s)! (\beta+\gamma)!}{\alpha!\beta!} \right)^{\theta_1} E_{k,\alpha+s,\beta+\gamma} \\
 &\leq \sum_{s=0}^{p-1} \sum_{\gamma=1}^{N_k-1} \sum_{{\alpha \leq N_k, \beta \leq N_k}} \Bigg\{ \sum_{\overset{\alpha \leq N_k-s}{\beta \leq N_k-\gamma}} + \sum_{\overset{\alpha \leq N_k, \beta \leq N_k}{\alpha+s > N_k \,\text{or}\, \beta+\gamma > N_k}} \Bigg\} (C\nu^{\lambda-1}_{k})^{\gamma+s} \nu^{-(p-1)(s+\gamma-1)}_{k} E_{k,\alpha+s,\beta+\gamma} \\
 &\leq \Bigg[E_{k} +  \sum_{s=0}^{p-1} \sum_{\gamma=1}^{N_k-1} \sum_{\overset{\alpha \leq N_k, \beta \leq N_k}{\alpha+s > N_k \,\text{or}\, \beta+\gamma > N_k}} C^{\alpha+s+\beta+\gamma+1} \{(\alpha+s)!(\beta+\gamma)!\}^{\theta_h-\theta_1} \Bigg] \\
 &\leq E_{k} + C^{N_k+1} N_k!^{\theta_h-\theta_1}. 
\end{align*}
From the definition of $N_k := \lfloor \nu_{k}^{\frac{\lambda}{\theta_1}} \rfloor$ we obtain 
\begin{align}\label{eq_estimate_third_term}
	\sum_{s=0}^{p-1} \sum_{\gamma=1}^{N_k-1} &C^{\gamma} \nu^{-(p-1)(s+\gamma-1)-s-\gamma}_{k} \sum_{\alpha \leq N_k, \beta \leq N_k} \frac{1}{(\alpha!\beta!)^{\theta_1}} \|v^{((\alpha+s)(\beta+\gamma))}_{k}\|
	\leq  E_{k} + C^{N_k+1} \nu^{C-cN_k}_{k}
\end{align}
for all $k$ large enough, where $C,c$ are positive constants independent of $k$.

Finally, for the last term in \eqref{ripartenza}, using the definition of $N_k := \lfloor \nu_{k}^{\frac{\lambda}{\theta_1}} \rfloor$ and recalling that $\theta_1 > \theta_h$ we easily conclude 
\begin{align}\label{eq_estimate_fourth_term}
	\sum_{\alpha \leq N_k, \beta \leq N_k} \frac{1}{(\alpha!\beta!)^{\theta_1}} C^{\alpha+\beta+N_k+1} (\alpha!\beta!)^{\theta_h} N_k!^{2\theta_h-1} \nu^{p-1-N_k}_{k} 
	\leq C^{N_k+1} \nu^{C-cN_k}_{k}.
\end{align}

From \eqref{eq_estimate_first_term},\eqref{eq_estimate_second_term},\eqref{eq_estimate_third_term} and \eqref{eq_estimate_fourth_term} we obtain the following proposition.

\begin{proposition}\label{prop_from_hell}
	If \eqref{cpp} is well-posed in $\mathcal{H}^{\infty}_{\theta}(\R)$, then for all $t \in [0,T]$ and $k$ sufficiently large the inequality
	\begin{align*}
		\partial_t E_{k}(t) \geq \Bigg[ c_1 \nu^{\Xi}_{k} - C \sum_{\ell = 1}^{p} \frac{\nu^{\ell\lambda}_{k}}{\nu^{p(\ell-1)}_{k}}
		\Bigg] E_{k}(t) - C^{N_k+1}\nu^{C-cN_k}_{k}
	\end{align*}
	holds for some $C, c >0$ independent from $k$ and $N_k$.
\end{proposition}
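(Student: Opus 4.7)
The plan is to combine the two key estimates from the previous subsections—namely, Lemma \ref{lemma_estimate_that_gives_contradiction} for the lower bound on $-\sum_j \mathrm{Re}\,\langle i a_{p-j} D^{p-j}_x v^{(\alpha\beta)}_k, v^{(\alpha\beta)}_k\rangle$ and Lemma \ref{lemma_estimate_f_k_alpha_beta} for the upper bound on $\|f^{(\alpha\beta)}_k\|$—with the basic energy identity \eqref{e}. Dividing \eqref{e} through by $\|v^{(\alpha\beta)}_k\|$ (and treating the trivial case $\|v^{(\alpha\beta)}_k\|=0$ separately) yields a pointwise lower bound for $\partial_t\|v^{(\alpha\beta)}_k\|$ with leading term $c_1 \nu^{\Xi}_k \|v^{(\alpha\beta)}_k\|$, plus lower-order corrections built out of $\|v^{((\alpha+s)(\beta+\gamma))}_k\|$ for various shifts and a tail decaying like $\nu_k^{-N_k}$ up to factorial factors.

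Summing this inequality against the weights $(\alpha!\beta!)^{-\theta_1}$ over $\alpha,\beta\leq N_k$, I would handle the four resulting contributions as in the displayed bounds \eqref{eq_estimate_first_term}--\eqref{eq_estimate_fourth_term}. The principal term $c_1\nu^{\Xi}_k \|v^{(\alpha\beta)}_k\|$ reassembles into $c_1\nu^{\Xi}_k E_k(t)$, with the constant $-C_{a_0}$ absorbed for $\nu_k$ large. For the commutator terms with shifted indices $(\alpha+\ell,\beta)$ or $(\alpha+s,\beta+\gamma)$, the trick is to rewrite them as $E_{k,\alpha+\ell,\beta}$ (respectively $E_{k,\alpha+s,\beta+\gamma}$) multiplied by the factorial ratio $((\alpha+\ell)!/\alpha!)^{\theta_1}$ (respectively the analogous two-index ratio), and then to split the sum into an interior part, where the shifted indices are still $\leq N_k$, and a boundary part, where at least one of them exceeds $N_k$. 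The interior sums are bounded by $E_k(t)$ times a power of $N_k$, which thanks to $N_k=\lfloor \nu_k^{\lambda/\theta_1}\rfloor$ is at most a power of $\nu_k^{\lambda}$, and this is exactly the source of the explicit polynomial correction $C\sum_{\ell=1}^p \nu_k^{\ell\lambda}/\nu_k^{p(\ell-1)}$ in the statement.

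The boundary parts cannot be reabsorbed in $E_k$ and must instead be estimated via the uniform a priori bound $E_{k,\alpha,\beta}\leq C^{\alpha+\beta+1}(\alpha!\beta!)^{\theta_h-\theta_1}$ from \eqref{Ekab}. Since at least one shifted index reaches $N_k$ and $\theta_1>\theta_h$, this produces a factor $N_k!^{\theta_h-\theta_1}$; combining it with $N_k!\geq (N_k/e)^{N_k}$ and $N_k=\lfloor \nu_k^{\lambda/\theta_1}\rfloor$ gives the super-polynomial decay $C^{N_k+1}\nu_k^{C-cN_k}$. The explicit factorial-weighted tail from Lemma \ref{lemma_estimate_f_k_alpha_beta} (and the analogous tail in the fourth contribution \eqref{eq_estimate_fourth_term}) is treated by exactly the same mechanism. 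Collecting these four bounds yields the displayed inequality.

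The most delicate point is controlling the double sum in the third term of \eqref{ripartenza}: one needs the observation that $(\beta+\gamma)!/\beta!\leq (rN_k)^{\gamma}$ on the range $\beta+\gamma\leq rN_k$, which combined with the factor $\nu_k^{-(p-1)(\gamma-1)-\gamma}$ produces a coefficient $(C\nu_k^{\lambda-1})^{\gamma+s}$. The restriction $\lambda\in(0,1)$ baked into the definition of $N_k$ is precisely what forces this coefficient to be $<1$ for $k$ large, making the geometric series in $\gamma$ summable and concentrating all the irreducible loss into the explicit polynomial correction in the conclusion. Every other step is essentially bookkeeping.
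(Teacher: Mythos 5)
Your proposal is correct and follows essentially the same route as the paper: combining the energy identity \eqref{e} with Lemmas \ref{lemma_estimate_that_gives_contradiction} and \ref{lemma_estimate_f_k_alpha_beta}, summing with the weights $(\alpha!\beta!)^{-\theta_1}$, and treating the four contributions exactly as in \eqref{eq_estimate_first_term}--\eqref{eq_estimate_fourth_term}, with the interior/boundary splitting, the bound \eqref{Ekab} for the out-of-range indices, and the factor $(C\nu_k^{\lambda-1})^{\gamma+s}<1$ coming from $\lambda\in(0,1)$ and $N_k=\lfloor\nu_k^{\lambda/\theta_1}\rfloor$. No gaps to report.
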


\section{The proof of Theorem \ref{main_theorem}}\label{section_proof_of_main_thm}
We are now ready to prove our main result. 
\\

\noindent \textit{Proof of Theorem \ref{main_theorem}.} Denote by
\begin{equation}\label{ak}
	A_k :=  c_1 \nu^{\Xi}_{k} - C \sum_{\ell = 1}^{p} \frac{\nu^{\ell\lambda}_{k}}{\nu^{p(\ell-1)}_{k}}, \quad R_k = C^{N_k+1}\nu^{C-cN_k}_{k}.
\end{equation}
By Proposition \ref{prop_from_hell} we have 
$\partial_t E_{k}(t) \geq A_k E_k(t) - R_k$.

Choosing the parameter $\lambda < \min\{\Xi, 1\}$, then we have $(\ell-1)(\lambda-p) + \lambda \leq \lambda < \Xi$, and so $\ell\lambda -p(\ell-1) < \Xi$, and this means that the leading term in $A_k$ is the first one; 
so for $k$ sufficiently large we obtain 
$$
A_{k} \geq \frac{c_1}{2} \nu^{\Xi}_{k}.
$$

In the next estimates we shall always consider $k$ sufficiently large. Applying Gronwall's inequality we obtain
\begin{align}\label{eq_almost_there}
	E_k(t) \geq e^{A_k t} \left(E_k(0) - R_k \int^{t}_{0} e^{- A_k \tau} d\tau  \right) \geq e^{t\frac{c_1}{2} \nu^{\Xi}_{k}} \left( E_k(0) - t R_k  \right),\quad t\in(0,T].
\end{align}

Now we need to estimate properly the term $R_k$ from above and $E_k(0)$ from below. Recalling that $R_k = C \nu_{k}^{C-cN_k}$ and $N_k := \lfloor \nu_{k}^{\frac{\lambda}{\theta_1}} \rfloor$ we may estimate $R_k$ from above in the following way
\begin{equation}\label{eq_almost_there_2}
R_k \leq C e^{-c \nu^{\frac{\lambda}{\theta_1}}_{k}}.
\end{equation}
The estimate for $E_k(0)$ is more involved. We have 
\begin{align*}
E_k(0) \geq \|w_k(x,D)\phi_k\|_{L^2(\R_x)} &= \left\| h\left( \frac{x-4\nu^{p-1}_{k}}{\nu^{p-1}_{k}} \right) h\left( \frac{D - \nu_k}{\frac{1}{4}\nu_{k}} \right) \phi_k \right\|_{L^2(\R_x)} \\
&= \left\| \mathcal{F} \left[ h \left(\frac{x-4\nu^{p-1}_{k}}{\nu^{p-1}_{k}} \right) \right] (\xi) \ast h\left( \frac{\xi-\nu_k}{\frac{1}{4}\nu_{k}} \right) \widehat{\phi}_k(\xi)  \right\|_{L^2(\R_{\xi})} \\
&= \nu^{p-1}_{k} \left\| e^{-4i\nu^{p-1}_{k} \xi}  \,\widehat{h}(\nu^{p-1}_{k} \xi) \ast h\left( \frac{\xi-\nu_k}{\frac{1}{4}\nu_{k}} \right) e^{-4i\nu^{p-1}_{k}\xi} \widehat{\phi}(\xi)  \right\|_{L^2(\R_{\xi})}
\end{align*}
hence 
\begin{equation*}
	E^{2}_k(0) \geq \nu^{2(p-1)}_{k} \int_{\R_{\xi}} \left| \int_{\R_\eta} \widehat{h}(\nu^{p-1}_{k}(\xi - \eta)) h\left( \frac{\eta-\nu_k}{\frac{1}{4}\nu_{k}} \right) \widehat{\phi}(\eta) d\eta
	\right|^{2} d\xi.
\end{equation*}
We choose the Gevrey cutoff function $h$ in such a way that $\widehat{h}(0) > 0$ and $\widehat{h}(\xi) \geq 0$ for all $\xi \in \R$. Then we get an estimate from below to $E_k^2(0)$ by restricting the integration domain. Indeed, we set 
$$
G_{1,k} = \left[\nu_k - \frac{\nu_k}{8}, \nu_k - \frac{\nu_k}{8} + \nu^{-p}_{k}\right] \bigcup \left[\nu_k + \frac{\nu_k}{8} - \nu^{-p}_{k}, \nu_k + \frac{\nu_k}{8}\right],
$$
$$
G_{2,k} = \left[\nu_k - \frac{\nu_k}{8}-\nu^{-p}_{k}, \nu_k - \frac{\nu_k}{8} + \nu^{-p}_{k}\right] \bigcup \left[\nu_k + \frac{\nu_k}{8} - \nu^{-p}_{k}, \nu_k + \frac{\nu_k}{8}+\nu^{-p}_{k}\right].
$$
Then $|\eta - \nu_k| \leq 8^{-1}\nu_k$ for all $\eta \in G_{1,k}$ and $\nu^{p-1}_{k}|\xi-\eta| \leq 2\nu^{-1}_{k}$ for all $\eta \in G_{1,k}$ and all $\xi \in G_{2,k}$. 
Now, if $(\xi,\eta)\in G_{2,k}\times G_{1,k}$, then $\nu^{p-1}_{k}(\xi - \eta)$ is close to zero at least for $k$ large enough, and so by the choice $\widehat{h}(0) > 0$ and $\widehat{h}(\xi) \geq 0$ there exists a positive constant $C$ such that  for $k$ large enough we have $\widehat{h}(\nu^{p-1}_{k}(\xi - \eta))>C$. Moreover, if $\eta\in G_{1,k}$ then $h\left( \frac{\eta-\nu_k}{\frac{1}{4}\nu_{k}} \right) =1$, and finally using the fact that $\hat\phi(\eta)=e^{-2\rho_0\langle\eta\rangle^{\frac1\theta}}$, but $\eta$ is comparable with $\nu_k$ on $G_{1,k}$, we can write $\hat\phi(\eta)\geq  e^{-c_{\rho_0} \nu^{\frac{1}{\theta}}_{k}}$ for a positive constant $c_{\rho_0}$ depending on $\rho_0;$. Summing up, by restricting the domain of integration we have
\begin{align*}
	E^{2}_k(0) &\geq C \nu^{2(p-1)}_{k} e^{-2c_{\rho_0} \nu^{\frac{1}{\theta}}_{k}}\int_{G_{2,k}} \left| \int_{G_{1,k}}  d\eta \right|^{2} d\xi \\
	&= C \nu^{-(p+2)}_{k} e^{-2c_{\rho_{0}} \nu_{k}^{\frac{1}{\theta}}}.
\end{align*}
%
%
In this way we get the following estimate
\begin{align}\label{eq_almost_there_3}
	E_{k}(0) \geq C \nu^{-(p+2)/2}_{k} e^{-c_{\rho_{0}} \nu_{k}^{\frac{1}{\theta}}}.
\end{align}

From \eqref{eq_almost_there}, \eqref{eq_almost_there_2} and \eqref{eq_almost_there_3} we conclude that for all $t \in (0,T]$
\begin{align}\label{estimate_from_below_E_k}
	E_k(t) \geq C e^{t \frac{c_1}{2} \nu_{k}^{\Xi}} \left[ \nu^{-(p+2)/2}_{k} e^{-c_{\rho_0} \nu^{\frac{1}{\theta}}_{k} } - te^{-c \nu^{\frac{\lambda}{\theta_1}}_{k}} \right],
\end{align}
provided that $\lambda < \min\{\Xi, 1\}$ and $k$ being sufficiently large.\\
	Suppose now that \eqref{cpp} is $\mathcal{H}^{\infty}_{\theta}(\R)$ well-posed and  
	assume by contradiction that $\frac{1}{\theta} < \Xi$. Then we take $\lambda = \frac{1}{\theta}+\tilde{\varepsilon}$ with $\tilde{\varepsilon} > 0$ small so that $\lambda < \Xi$  and $\lambda < 1$. After that, we set $\theta_1$ very close to $1$ to get $\frac{\lambda}{\theta_1} > \frac{1}{\theta}$. In this way \eqref{estimate_from_below_E_k} implies for $k$ sufficiently large
	\begin{align*}
		E_k(t) \geq C_2 e^{\tilde{c}_0 \nu_{k}^{\Xi}} e^{-\tilde{c}_{\rho_0} \nu^{\frac{1}{\theta}}_{k} }.
	\end{align*}
	Thus, for $k$ sufficiently large
	\begin{align*}
		E_k(t) \geq C_3 e^{\frac{\tilde{c}_0}{2} \nu_{k}^{\Xi}} \to \infty \quad \textit{for} \quad k \to \infty.
	\end{align*}
	The latter inequality provides a contradiction, since the assumed $\mathcal{H}^{\infty}_{\theta}$ well-posedness implies that the energy $E_k(t)$ is uniformly bounded from above, see \eqref{eq_estimate_from_above_of_energy_k}.
	\qed


\end{document}